\numberwithin{equation}{section}
\newcommand{\bin}{\operatorname{Bin}}
\newcommand{\ber}{\operatorname{Be}}
\renewcommand{\restriction}{\mathord{\upharpoonright}}
\renewcommand{\epsilon}{\varepsilon}
\newcommand{\given}{\;\big|\;}
\newcommand{\one}{\mathbf{1}}
\newtheorem{maintheorem}{Theorem}
\newtheorem{theorem}{Theorem}[section]
\newtheorem*{theorem*}{Theorem}
\newtheorem{lemma}[theorem]{Lemma}
\newtheorem{proposition}[theorem]{Proposition}
\newtheorem{observation}[theorem]{Observation}
\newtheorem*{observation*}{Observation}
\newtheorem{fact}[theorem]{Fact}
\newtheorem{corollary}[theorem]{Corollary}
\newtheorem{remark}[theorem]{Remark}
\theoremstyle{definition}{

\newtheorem*{definition*}{Definition}

}
\newcommand{\E}{\mathbb E}
\newcommand{\N}{\mathbb N}
\renewcommand{\P}{\mathbb P}
\newcommand{\R}{\mathbb R}
\newcommand{\cA}{\mathcal A}
\newcommand{\cB}{\mathcal B}
\newcommand{\cC}{\mathcal C}
\newcommand{\cE}{\mathcal E}
\newcommand{\cF}{\mathcal F}
\newcommand{\cG}{\mathcal G}
\newcommand{\cK}{\mathcal K}
\newcommand{\cR}{\mathcal R}
\newcommand{\cT}{\mathcal T}
\newcommand{\cW}{\mathcal W}
\newcommand{\bH}{\mathbf H}
\newcommand{\bM}{\mathbf M}
\newcommand{\bX}{\mathbf X}
\newcommand{\bxi}{\boldsymbol \xi}
\newcommand{\fG}{\mathfrak G}
\newcommand{\fM}{\mathfrak M}
\newcommand{\sM}{\mathsf M}
\newcommand{\scM}{\mathscr M}
\newcommand{\scW}{\mathscr W}
\DeclareMathOperator{\var}{Var}
\DeclareMathOperator{\dist}{dist}
\DeclareMathOperator{\diam}{diam}
\DeclareMathOperator{\cov}{Cov}
\DeclareMathOperator{\wt}{w}
\DeclareMathOperator{\hgt}{ht}
\newcommand{\GHP}{\textsc{ghp}}
\newcommand{\selltwo}{\ell^2_\shortdownarrow}
\begin{document}

\title{Noise sensitivity of critical random graphs}

\author{Eyal Lubetzky}
\address{E.\ Lubetzky\hfill\break
Courant Institute\\ New York University\\
251 Mercer Street\\ New York, NY 10012.}
\email{eyal@courant.nyu.edu}

\author{Yuval Peled}
\address{Y.\ Peled\hfill\break
Courant Institute\\ New York University\\
251 Mercer Street\\ New York, NY 10012.}
\email{yuval.peled@courant.nyu.edu}

\begin{abstract}
We study noise sensitivity of properties of the largest components $(\cC_j)_{j\geq 1}$ of the random graph $\cG(n,p)$ in its critical window $p=(1+\lambda n^{-1/3})/n$. For instance, is the property ``$|\cC_1|$  exceeds its median size'' noise sensitive? 
Roberts and \c{S}eng\"{u}l (2018) proved that the answer to this is yes if the noise $\epsilon$ is such that $\epsilon \gg n^{-1/6}$, and conjectured the correct threshold is~$\epsilon \gg n^{-1/3}$. 
That is, the threshold for sensitivity should coincide with the critical window---as shown for the existence of  long cycles by the first author and Steif~(2015).

We prove that for $\epsilon\gg n^{-1/3}$ the pair of  vectors $ n^{-2/3}(|\cC_j|)_{j\geq 1}$ before and after the~noise converges in distribution to a pair of  \emph{i.i.d.} random variables, whereas for $\epsilon\ll n^{-1/3}$ the $\ell^2$-distance between the two goes to 0 in probability. This confirms the above conjecture: any Boolean function of the vector of rescaled component sizes is sensitive in the former case and stable in the latter.

We also look at the effect of the noise  on the metric space $n^{-1/3}(\cC_j)_{j\geq 1}$. E.g., for  $\epsilon\geq n^{-1/3+o(1)}$,  we show that the joint law of the spaces before and after the noise converges to a product measure, 
implying noise sensitivity of any property seen in the limit, e.g., ``the diameter of $\cC_1$ exceeds its median.'' 
\end{abstract}

 \maketitle
{\mbox{}\vspace{-0.6cm}
\maketitle
}
\vspace{-0.4in}

\section{Introduction} 
A sequence of Boolean functions $f_N$ on a product space of $N$ Bernoulli variables is \emph{noise sensitive}, a notion introduced in the pioneering paper of Benjamini, Kalai and Schramm~\cite{BKS99}, if the resampling of an $\epsilon$-fraction of the inputs, for a fixed $\epsilon>0$, results in an asymptotically independent copy of $f_N$ in the limit as $N\to\infty$.

More precisely, consider i.i.d.\ Bernoulli($p_N$) random variables $\{\omega(i)\}_{i=1}^N$ and  functions $f_N:\{0,1\}^N\to\{0,1\}$, such that $\P(f_N(\omega)=1)$ is uniformly bounded away from $0$ and $1$. Let $\omega^\epsilon$ be the result of applying the noise operator $T_\epsilon$ on~$\omega$, whereby every $\omega_i$ is resampled (replaced by a new i.i.d.\ Bernoulli($p$) variable) with probability $\epsilon$, independently of the other coordinates.
The functions $f_N$ are said to be $\epsilon_N$-noise sensitive for $\epsilon=\epsilon_N>0$ if $\cov(f_N(\omega),f_N(\omega^\epsilon))\to 0$ as $N\to\infty$.

A remarkable argument of~\cite{BKS99} (see~\cite[\S1]{GS15}) showed that if $p_N\equiv\frac12$ and $f_N$ is increasing w.r.t.\ the partial order on the hypercube $\{0,1\}^N$, then noise sensitivity w.r.t.\ any (and every) $0<\epsilon<1$ occurs if and only if $\lim_{N\to\infty}\sum_{i=1}^N \hat f_N(\{i\})^2 = 0$, where $\hat f_N(\{i\})$ is the Fourier coefficient of $f_N$ corresponding to the singleton $i$. 
This ceases to apply when $p_N\to 0$ fast with $N$, as Benjamini et al.~\cite[\S6.4]{BKS99} noted: ``\emph{When $p$ tends to zero with $n$, new phenomena occur. Consider, for example, random graphs on $n$ vertices with edge probability $p = n^{-a}$}.''

Consider the Erd\H{o}s--R\'enyi random graph $\cG(n,p)$---the undirected graph on $n$ vertices where each edge is independently present with probability $p$---in the critical window of its celebrated phase transition, $p=(1+\lambda n^{-1/3})/n$ for fixed $\lambda\in\R$.
Aldous~\cite{Aldous97} famously found
the limit law of its vector $\bX_n$ of rescaled component~sizes.
Here we show a dichotomy for noise sensitivity of functions in the critical window: if the noise parameter $\epsilon$ is such that $\epsilon \gg n^{-1/3}$ then 
any non-degenerate function of $\bX_n$ is noise sensitive, whereas if $\epsilon = o(n^{-1/3})$ it is noise stable (an opposite notion).
Analogous results are obtained for functions of the (rescaled) metric space of $G_n$.

Note that the threshold $\epsilon\gg n^{-1/3}$ to achieve noise sensitivity exactly coincides with the width of the critical window ($p=(1+O(n^{-1/3})p_c$ where $p_c=1/n$). 
As part of their study of noise sensitivity in $\cG(n,p)$, the first author and Steif~\cite{LS15} demonstrated this for the property ``containing a cycle of length $\ell\in(an^{1/3},bn^{1/3})$,'' and 
gave a heuristic explanation of why it should occur for a ``global'' function~$f_n$:
Viewing the operator $T_\epsilon$ as a composition of sprinkling and percolation, the latter induces a subcritical graph where $f_n$ is degenerate, thus decorrelated (see~\cite[\S1.2]{LS15}).

Roberts and \c{S}eng\"{u}l~\cite{RS18} recently established that, for the largest component $\cC_1$ of~$\cG(n,p)$ at $p=1/n$, the property that $|\cC_1| \geq a n^{2/3}$ ($a>0$ fixed) is noise sensitive (N.B.\ the median of $|\cC_1|$ has order~$n^{2/3}$ at criticality). More precisely, they showed that this property is noise sensitive whenever $\epsilon \gg n^{-1/6}$,
and conjectured that the correct threshold for noise sensitivity is $\epsilon\gg n^{-1/3}$, in line with the above intuition.

Let $\cC_i=\cC_i(G)$ be the $i$-th largest component of a graph $G$, and write
\[ {\bf X}(G) = |V(G)|^{-2/3}(|\cC_1(G)|,|\cC_2(G)|,\ldots)\,,\]
viewing it as an element of $\selltwo$, the space of sequences $(x_i)_{i\geq 1}$ wherein the  $x_i\in\R_+$ are in decreasing order and $\sum x_i^2 < \infty$. Aldous~\cite{Aldous97} showed that when $G_n\sim\cG(n,p)$ for $p=(1+\lambda n^{-1/3})/n$ with $\lambda\in \R$ fixed, $\bX(G_n)\xrightarrow{\,\mathrm{d}\,}\bxi(\lambda)$ in the $\ell^2$-topology, where $\bxi(\lambda)$ is the sequence of excursion sizes, in decreasing order, of Brownian motion reflected at 0 with a parabolic drift as given by $\widehat W_t^{(\lambda)} = W^{(\lambda)}_t-\min_{0\leq s\leq t}W^{(\lambda)}_s$ where $W_t^{(\lambda)}= B_t+\lambda t - t^2/2$ for standard Brownian motion $B_t$.

Our first result confirms the above conjecture of Roberts and \c{S}eng\"{u}l, showing moreover that the ``noise sensitivity exponent'' (dubbed  so in~\cite[\S12]{GS15}) is indeed~$1/3$, coinciding with the width of the critical window, for any Boolean function of $\bX(G_n)$. 

\begin{maintheorem}\label{thm:1}
 Let $G_n\sim\cG(n,p)$ with $p=(1+\lambda n^{-1/3})/n$ for fixed $\lambda\in\R$, and let $G_n^\epsilon = T_\epsilon(G_n)$, where $T_\epsilon$ is the (Bonami--Beckner) noise operator for $0<\epsilon<1$.
 
 \begin{enumerate}[(i)]
     \item \label{it:sens} If $\epsilon=\epsilon(n)$ satisfies $\epsilon^3 n\to\infty$ as $n\to\infty$ then $(\bX(G_n),\bX(G_n^{\epsilon}))$ converges in distribution in $(\selltwo,\|\cdot\|_2)$ to a pair of independent copies of $\bxi(\lambda)$.
     \item \label{it:stab} If $\epsilon =\epsilon(n)$ satisfies $\epsilon^3 n \to 0$ as $n\to\infty$ 
     then
     $\|\bX(G_n)-\bX(G_n^{\epsilon})\|_2
     \xrightarrow{\,\mathrm{p}\,} 0$.
 \end{enumerate}
\end{maintheorem}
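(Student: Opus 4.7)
The plan is to work with the explicit joint realization of $(G_n,G_n^\epsilon)$ obtained by drawing, independently for each $e\in\binom{[n]}{2}$, a resample flag $R_e\sim\ber(\epsilon)$ and $X_e,Y_e\sim\ber(p)$, and declaring $e\in G_n$ iff $X_e=1$ and $e\in G_n^\epsilon$ iff $(R_e=0,X_e=1)$ or $(R_e=1,Y_e=1)$. The common core $G^\ast:=G_n\cap G_n^\epsilon$ is then $\cG(n,p^\ast)$ with $p^\ast=p(1-\epsilon(1-p))$, corresponding to rescaled shift $\lambda^\ast=\lambda-\epsilon n^{1/3}+o(1)$, while the two symmetric-difference graphs $H_G:=G_n\setminus G^\ast$ and $H_{G^\epsilon}:=G_n^\epsilon\setminus G^\ast$ are disjoint and each has marginal law $\cG(n,\epsilon p(1-p))$.

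For Part~(ii), $\epsilon n^{1/3}\to 0$ yields $\lambda^\ast\to\lambda$, so by Aldous's theorem $\bX(G^\ast)\xrightarrow{\,\mathrm{d}\,}\bxi(\lambda)$ in $\selltwo$. By the triangle inequality it then suffices to prove the following \emph{sprinkling-stability} estimate: if $G'$ is obtained from $G\sim\cG(n,p^\ast)$ by adding $M=O_{\mathrm p}(\epsilon n)=o_{\mathrm p}(n^{2/3})$ uniformly random extra edges then $\|\bX(G')-\bX(G)\|_2\xrightarrow{\,\mathrm{p}\,}0$. I would prove this by observing that each added edge merges a random pair of components $(\cC_i,\cC_j)$ with probability $\sim 2|\cC_i|\,|\cC_j|/n^2$; a merger of components of sizes $a,b$ changes the sorted rescaled vector by at most $\sqrt{2}\min(a,b)/n^{2/3}$ in $\ell^2$. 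Using the standard estimate $\sum_i|\cC_i|^2=O_{\mathrm p}(n^{4/3})$ for the critical random graph, the per-edge expected increment to $\|\cdot\|_2^2$ is $O(n^{-2/3})$, whence the $M=o(n^{2/3})$ added edges contribute $o_{\mathrm p}(1)$ in $\|\cdot\|_2^2$. Applied to $G^\ast\subseteq G_n$ and $G^\ast\subseteq G_n^\epsilon$, this gives $\|\bX(G_n)-\bX(G^\ast)\|_2,\,\|\bX(G_n^\epsilon)-\bX(G^\ast)\|_2\xrightarrow{\,\mathrm{p}\,}0$, and the triangle inequality concludes.

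For Part~(i), $\epsilon n^{1/3}\to\infty$ sends $\lambda^\ast\to-\infty$ so $G^\ast$ is barely subcritical; standard estimates give $\sum_i|\cC_i(G^\ast)|^2=O_{\mathrm p}(n/\epsilon)$ and hence $\|\bX(G^\ast)\|_2^2=O_{\mathrm p}((\epsilon n^{1/3})^{-1})\xrightarrow{\,\mathrm{p}\,}0$. Conditional on $G^\ast$, the true joint law of $(H_G,H_{G^\epsilon})$ differs from that of a pair of \emph{conditionally independent} $\cG(n,q)$ samples on the non-edges of $G^\ast$ (with $q\sim\epsilon p$) only through the constraint $H_G\cap H_{G^\epsilon}=\emptyset$; the total-variation cost per edge is $2q^2$, totalling $O(n^2q^2)=O(\epsilon^2)$ expected discrepant edges, which is $o(n^{2/3})$ for every $\epsilon=\epsilon(n)<1$. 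Coupling to the conditionally independent surrogate $(\tilde H_G,\tilde H_{G^\epsilon})$ and absorbing the $O(\epsilon^2)$ edge discrepancies via the sprinkling-stability estimate from Part~(ii), it suffices to prove joint convergence of $(\bX(G^\ast\cup\tilde H_G),\bX(G^\ast\cup\tilde H_{G^\epsilon}))$ to two independent copies of $\bxi(\lambda)$. Marginally each of $G^\ast\cup\tilde H_G$ and $G^\ast\cup\tilde H_{G^\epsilon}$ has law $\cG(n,p)$ by construction of $q$, so Aldous gives marginal convergence to $\bxi(\lambda)$; moreover $\tilde H_G$ and $\tilde H_{G^\epsilon}$ are independent given $G^\ast$. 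The joint law then converges to the product of marginals once we show that the conditional limiting distribution of $\bX(G^\ast\cup\tilde H_G)$ given $G^\ast$ equals the deterministic $\bxi(\lambda)$ for a.e.\ realization of $G^\ast$. I would obtain this by viewing $\tilde H_G$ as the driving noise in Aldous's multiplicative-coalescent evolution initialized at the rescaled component profile of $G^\ast$, and invoking continuity of the scaling limit in the initial condition (in $\selltwo$): since $\|\bX(G^\ast)\|_2\xrightarrow{\,\mathrm{p}\,}0$, the conditional limit coincides with the singleton-initialized limit, namely $\bxi(\lambda)$.

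The main obstacle will be this last continuity step. Aldous's own theorem identifies the singleton-initialized coalescent limit; one needs a version allowing the random initial profile $(|\cC_i(G^\ast)|)$ with $\|\bX(G^\ast)\|_2\xrightarrow{\,\mathrm{p}\,}0$, and a conditional-convergence guarantee uniform enough in $G^\ast$ to transfer conditional independence into independence of the limits. Establishing this robust continuity---essentially, that the scaling limit of $(\bX(G_n),\bX(G_n^\epsilon))$ is a measurable function of two independent sprinkling processes with no residual dependence on $G^\ast$---is where the bulk of the technical effort will lie.
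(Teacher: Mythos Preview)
Your overall architecture—condition on a subcritical common core and view $G_n,G_n^\epsilon$ as sprinklings on it—is exactly the paper's. Two points of comparison.

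For Part~(i) you take the core to be the literal intersection $G^\ast=G_n\cap G_n^\epsilon$ and must then repair the failure of conditional independence of $H_G,H_{G^\epsilon}$ given $G^\ast$ via a TV-coupling. The paper sidesteps this by choosing a slightly smaller core $G_0\sim\cG(n,p_0)$ with $p_0=p(1-\epsilon)/(1-\epsilon p)$ and defining $G_1,G_1'$ by independent $\ber(\epsilon p)$-sprinkling on the non-edges of $G_0$. A one-line covariance check gives $(G_1,G_1')\stackrel{\mathrm d}{=}(G_n,G_n^\epsilon)$, and now $G_1,G_1'$ are \emph{exactly} conditionally independent given $G_0$. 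Your TV step, and the subsequent absorption of $O(\epsilon^2)$ discrepant edges (delicate, since edge \emph{deletions} can split large components), disappears entirely; one is left only with showing that the conditional law of $\bX(G_1)$ given $G_0$ converges to that of $\bxi(\lambda)$ for typical $G_0$.

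The more substantive issue is your proposed resolution of that last step: ``continuity of the scaling limit in the initial condition in $\selltwo$, since $\|\bX(G^\ast)\|_2\to0$.'' This does not work as stated. The condition $\sigma_2\to0$ alone does not pin down the coalescent limit: for instance a single block of size $n^{1/2}$ among singletons has $\sigma_2\asymp n^{-1/3}\to0$ but $x_{\max}/\sigma_2\asymp n^{1/6}\to\infty$ and $\sigma_3/\sigma_2^3\to\infty$. The correct tool is Aldous's Proposition~4, whose hypotheses are the three conditions $\sigma_3/\sigma_2^3\to1$, $q-\sigma_2^{-1}\to\lambda$, $x_{\max}/\sigma_2\to0$ on the initial profile. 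The paper verifies these hold w.h.p.\ for the subcritical $G_0$ using the Janson--\L uczak moment estimates for $\sum_i|\cC_i(G_0)|^r$ ($r=2,3$) and the classical bound on $|\cC_1(G_0)|$. So the ``bulk of technical effort'' you anticipate is in fact a citation of Aldous's proposition plus a routine check of its hypotheses on the random subcritical initial data; no abstract $\selltwo$-continuity principle is needed (or available).

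For Part~(ii) the paper's argument is essentially yours, but it replaces the per-edge accounting by Aldous's Lemma~17 (yielding $\|\bX(G_1)-\bX(G_0)\|_2^2\le\|\bX(G_1)\|_2^2-\|\bX(G_0)\|_2^2$) together with his Lemma~20 (a tail bound on the susceptibility of $\scW(\mathbf x,q)$), which cleanly avoids having to track the growth of $\sum_i|\cC_i|^2$ as edges are added.
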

Part~\eqref{it:sens} implies that, for $\epsilon\gg n^{-1/3}$, any function of $\bX(G_n)$ is noise sensitive, whereas Part~\eqref{it:stab} implies that, for $\epsilon\ll n^{-1/3}$, any such function is noise stable in the sense that $\P(f_n(G_n)\neq f_n(G_n^\epsilon))\to 0$.
The joint law of $\bX(G_n),\bX(G_n^\epsilon)$ in the case where $\epsilon = t n^{-1/3}$ for a fixed $t> 0$ was recently obtained by Rossignol~\cite{Rossignol17} (a special case of the scaling limit of the pair of metric spaces, as mentioned below), whence the limit is nontrivial---neither two i.i.d.\ variables nor two identical ones.

The key to the proof of Part~\eqref{it:sens} is a short argument giving rigor to the  heuristic that  $f_n(G_n),f_n(G_n^\epsilon)$ decorrelate iff the noise $\epsilon$ ``draws'' $G_n$ outside of the critical window, where $f_n$ is degenerate.
 Suppose $\P(f_n(\omega)=1)\to \alpha$ for some $0<\alpha<1$.
By constructing $\omega$ via sprinkling on a subcritical $\omega_0$ comprised of i.i.d.\ $\ber(p_0)$ variables for $p_0=(1-\epsilon)p/(1-\epsilon p)$ (the percolation portion of the noise), one finds (see~Cor.~\ref{cor:common-core} and~\ref{cor:common-core-full}) that $f_n$ is noise sensitive if and only if $\P(f_n(\omega)=1 \mid \omega_0)\xrightarrow{\,\mathrm p\,}\alpha$. This reduces our problem to finding a set of graphs $\mathcal{H}_n$ such that $\P(H_n\in\mathcal{H}_n)\to 1$ for the subcritical $H_n\sim\cG(n,p_0)$, and for every $H_n\in\mathcal{H}_n$, the conditional law of $\bX(G_n)$ given $H_n$ converges to $\bxi(\lambda)$ in distribution. We then utilize a proposition of Aldous---driving his convergence result in~\cite{Aldous97}---which states that if the initial state of the multiplicative coalescent satisfies certain conditions, then its limit law is that of~$\bxi(\lambda)$. Verifying these conditions for almost every $H_n$ concludes Theorem~\ref{thm:1}\eqref{it:sens}.

We also study the exponent in the noise sensitivity threshold for properties of a critical $\cG(n,p)$ graph that are read, instead of from $\bX(G_n)$, rather from the rescaled metric spaces arising from the connected components of $G_n$. For a graph $G$, let
\[ \bM(G) = (\sM_1(G),\sM_2(G),\ldots)\,,\]
with $\sM_j(G)$ the measured metric space on $\cC_j(G)$ equipped with the graph metric scaled by $|V(G)|^{-1/3}$ and counting measure on vertices multiplied by $|V(G)|^{-2/3}$. A breakthrough result of Addario-Berry, Broutin and Goldschmidt~\cite{ABG12} established the scaling limit of the rescaled metric spaces in Gromov--Hausdorff distance ($d_{\textsc{gh}}$). This was extended to convergence in Gromov--Hausdorff-Prokhorov distance ($d_\GHP$) by those authors and Miermont in~\cite[Sec.~4.1]{ABGM17}, and we let $\fM(\lambda) = (\fM_j(\lambda):j\ge 1)$ denote the sequence of random measured metric spaces that describe the limit, where every $\fM_j$ is an element of $\scM$, the space of isometry-equivalence classes of compact measured metric spaces. Further let $d_\GHP^p(\bM,\bM') = (\sum_i d_\GHP(\sM_i,\sM_i')^p)^{\frac1p}$ and set
$\mathbb{L}_p=\{ \bM\in\scM^\N: \sum_i d_\GHP(\sM_i,\mathsf{Z})^p<\infty\}$ where $\mathsf{Z}$ is the zero metric space.

With this notation, our second result shows that the noise sensitivity exponent is $1/3$ for every Boolean function of $\bM(G_n)$ when $G_n$ is a critical $\cG(n,p)$ graph.
\begin{maintheorem}
\label{thm:2} Let $G_n\sim\cG(n,p)$ with $p=(1+\lambda n^{-1/3})/n$ for fixed $\lambda\in\R$, and let $G_n^\epsilon = T_\epsilon(G_n)$, where $T_\epsilon$ is the (Bonami--Beckner) noise operator for $0<\epsilon<1$.
 \begin{enumerate}[(i)]
     \item \label{it:sens2} Fix $\delta>0$. If $\epsilon(n) \geq n^{-1/3+\delta}$ then, for every $j$, the pair $(\sM_j(G_n),\sM_j(G_n^{\epsilon}))$ converges in distribution to a pair of independent copies of~$\fM_j(\lambda)$ in $(\scM\!,d_\GHP)$.
     
     Moreover, the pair
     $(\bM(G_n),\bM(G_n^{\epsilon}))$ converges in distribution to a pair of independent copies of $\fM(\lambda)$ in $(\mathbb L_4,d^4_{\GHP})$.
     \item \label{it:stab2} If $\epsilon(n) = o(n^{-1/3})$ 
     then
     $d_{\GHP}^4(\bM(G_n), \bM(G_n^{\epsilon}))
     \xrightarrow{\,\mathrm{p}\,} 0$ in $\mathbb L_4$.
 \end{enumerate}
\end{maintheorem}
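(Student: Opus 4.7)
The plan is to adapt the sprinkling strategy of Theorem~\ref{thm:1} to the metric-space setting, invoking the augmented (metric-measure) version of the multiplicative coalescent in place of Aldous's size-only version.

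For Part~\eqref{it:sens2}, write $G_n=H_n\cup\Delta$ and $G_n^\epsilon=H_n\cup\Delta'$, with $H_n\sim\cG(n,p_0)$ the common core ($p_0=(1-\epsilon)p/(1-\epsilon p)$) and $\Delta,\Delta'$ conditionally independent sprinkled-edge configurations. The conditional-decorrelation identity used in the proof of Theorem~\ref{thm:1} applies verbatim to Boolean functions of the pair $(\bM(G_n),\bM(G_n^\epsilon))$, so it suffices to show that for $H_n$ in an event of probability $1-o(1)$, the conditional law of $\bM(G_n)$ given $H_n$ converges to $\fM(\lambda)$ in $(\mathbb{L}_4,d_\GHP^4)$. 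For $\epsilon\geq n^{-1/3+\delta}$, a direct computation yields $\lambda_0:=(np_0-1)n^{1/3}\leq -n^{\delta}/2$ for large $n$, so $H_n$ is deeply subcritical: w.h.p., its components are trees (or carry $O(1)$ surplus) of size $o(n^{2/3})$ and diameter $o(n^{1/3})$. Hence, on the critical scaling, the rescaled measured metric profile of $H_n$ is ``dust'' in $\scM$, while its rescaled mass vector satisfies Aldous's $\ell^2$ conditions (already verified in the proof of Theorem~\ref{thm:1}\eqref{it:sens}). The evolution from $H_n$ to $G_n$ via the sprinkled edges $\Delta$ is a run of the augmented multiplicative coalescent, and its scaling limit from such an initial state is $\fM(\lambda)$~\cite{ABGM17}; this yields the required conditional convergence, and hence Part~\eqref{it:sens2}.

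For Part~\eqref{it:stab2}, when $\epsilon=o(n^{-1/3})$ we have $\lambda_0\to\lambda$, so $H_n$ is itself critical and $\bM(H_n)$ converges in distribution to $\fM(\lambda)$. The triangle inequality in $d_\GHP^4$ gives
\[
d_\GHP^4\!\left(\bM(G_n),\bM(G_n^\epsilon)\right)\leq d_\GHP^4\!\left(\bM(G_n),\bM(H_n)\right)+d_\GHP^4\!\left(\bM(H_n),\bM(G_n^\epsilon)\right),
\]
so it suffices to show that adding the sprinkled edges $\Delta$ (resp.\ $\Delta'$) perturbs the measured metric profile by $o(1)$ in $d_\GHP^4$. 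Since $\E|\Delta|=O(\epsilon n)=o(n^{2/3})$, a first-moment argument shows that w.h.p.\ no edge of $\Delta$ lies in or connects two ``large'' ($\geq\alpha n^{2/3}$) components of $H_n$; moreover, the expected total mass of small components attached to large ones is bounded by $\E|\Delta|\cdot \frac{|L|}{n^2}\sum_i|\cC_i(H_n)|^2=o(n^{2/3})$ and hence contributes $o(1)$ after rescaling, while the small-component tail is controlled by the uniform integrability furnished by $\mathbb{L}_4$-convergence of $\bM(H_n)$.

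The primary technical obstacle lies in Part~\eqref{it:sens2}: the scaling-limit result~\cite{ABGM17} for the augmented multiplicative coalescent is naturally stated along the canonical $\cG(n,p)$-evolution, whereas we need it \emph{conditionally} on an arbitrary typical initial state $H_n$ (with vanishing rescaled diameter and bounded surplus). The necessary universality is implicit in~\cite{ABGM17} and the surrounding literature, but extracting a clean black-box statement and handling the small but nonzero surplus carried by $H_n$ will require the most care.
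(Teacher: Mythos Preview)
Your high-level strategy matches the paper's, but there is a genuine gap in Part~\eqref{it:sens2}. Saying that $H_n$ is ``dust'' with the Aldous $\ell^2$ conditions already verified is not enough: the metric-space convergence result you need (the paper uses~\cite[Thm.~3.4]{BBSW14}, not~\cite{ABGM17}) requires, in addition to the three size conditions~\eqref{eqn:aldous_assumptions}, four further hypotheses~\eqref{eqn:metric_assumptions} involving $d_{\max}=\max_i\diam(\cC_i(H_n))$ and the \emph{average intra-component distances} $u_i$. In particular one must control $\sum_i x_i^2 u_i = n^{-4/3}\sum_{u\leftrightarrow v}\dist_{H_n}(u,v)$, and the scaling factor $\mathfrak{s}(\mathbf{x},\bH)$ in the~\cite{BBSW14} limit is defined in terms of this quantity, so matching it to $n^{-1/3}$ also hinges on it. The paper proves a first- and second-moment estimate for this distance sum in the subcritical graph (Lemma~\ref{lem:subcritical_metric}), which is new and is where most of the work lies; your proposal does not supply any analogue. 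A second omission is that, conditioned on $H_n$, the graph $G_1$ is \emph{not} literally distributed as $\scW(\mathbf{x},\bH,q)$: there may be multiple sprinkled edges between two components, or extra edges inside one. The paper handles this via a coupling lemma (Lemma~\ref{lem:coupling_metric_spaces}) bounding the resulting $d_{\GHP}$ discrepancy; this is not hard but cannot be skipped.

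For Part~\eqref{it:stab2} your sketch controls masses but not the metric. Even once you know the pieces glued onto $\cC_j(H_n)$ have small \emph{mass}, you must also show (a) they have small \emph{diameter} (so the Hausdorff distance is small), and (b) no sprinkled path creates a shortcut between two vertices of $\cC_j(H_n)$ (so the intrinsic metric on $\cC_j(H_n)$ is unchanged). The paper does this via three separate lemmas: that $\cC_j(G_0)\subseteq\cC_j(G_1)$ w.h.p., that every $v\in\cC_j(G_1)\setminus\cC_j(G_0)$ lies within $o(n^{1/3})$ of $\cC_j(G_0)$, and that $\dist_{G_1}=\dist_{G_0}$ on $\cC_j(G_0)$. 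Your first-moment bound on edges between large components addresses only a piece of~(b); the diameter control~(a) in particular needs an independent argument (the paper uses a Galton--Watson height estimate).
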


As a consequence of Part~\eqref{it:sens2} of this theorem, in the regime
$\epsilon\geq n^{-1/3+\delta}$ we have that any nondegenerate function of $\bM(G_n)$ is noise sensitive. For example, this includes properties such as ``the diameter of $\cC_1(G_n)$ exceeds $a n^{1/3}$,'' (or its special case asking if this diameter exceeds its median), as well as the property ``$G_n$ contains a cycle of length between $a n^{1/3}$ and $b n^{1/3}$'' whose noise sensitivity was established in~\cite{LS15}. As mentioned there, the heuristic for the $1/3$ noise sensitivity exponent may fail for a ``local'' property (see Remark~3.5 in that work). Theorem~\ref{thm:2} thus shows that properties seen in the scaling limit $\fM(\lambda)$ qualify as ``global'' to that end.

The proof of Theorem~\ref{thm:2}\eqref{it:sens2} uses the same approach described above: one wishes to show that when constructing $G_n$ via sprinkling edges on a suitable $H_n$, the conditional law of $\bM(G_n)$ given $H_n$ weakly converges to $\fM(\lambda)$ for almost every $H_n$. Instead of the key proposition of Aldous on the convergence of the multiplicative coalescent from an initial state satisfying certain conditions, we employ an analog of it for metric spaces by Bhamidi, Broutin, Sen and Wang~\cite{BBSW14}.
To apply this powerful general result, of the various conditions that $H_n$ should satisfy, one must control the mean and variance of the average distance between all pairs of connected vertices.

As mentioned in the context of Theorem~\ref{thm:1}, the joint law of $(\bM(G_n),\bM(G_n^\epsilon))$ in the case where $\epsilon = t n^{-1/3}$ for fixed $t>0$ was recently derived by Rossignol~\cite{Rossignol17}, as a special case of a convergence result of $\{\bM(G_n(t))\}_{t\geq 0}$ where $\{G_n(t)\}_{t\geq 0}$ is the dynamical percolation process on the critical $\cG(n,p)$ graph with intensity $n^{-1/3}$. In principal, one could build on this to recover Part~\eqref{it:stab2} of Theorems~\ref{thm:1} and~\ref{thm:2} (which corresponds to~$t=o(1)$), yet given that the analysis of the case $t=o(1)$ is considerably simpler, we include the short proofs of stability for completeness.

This paper is organized as follows. Section~\ref{sec:common-core} describes the short
 reduction of noise sensitivity to controlling the conditional variance of $f_n(\omega)$ given a subcritical~$\omega_0$. Section~\ref{sec:subcritical} includes properties of the subcritical graph $\cG(n,p_0)$ which are needed for the proofs. Sections~\ref{sec:conv-X} and~\ref{sec:conv-M} then establish Theorems~\ref{thm:1} and~\ref{thm:2}, resp., via these tools.

\section{Conditional variance given a common core}\label{sec:common-core} 

For some sequences $0<p(n)<1$, $0<\epsilon(n)<1$ and $N(n)$, where $N$ is an integer going to $\infty$ with $n$, let $\Omega_n = \{0,1\}^{N}$ and let $\omega\sim\ber(p)^{\otimes N}$, the random variable taking values in $\Omega_n$ whose $N$ coordinates are i.i.d.\ $\ber(p)$ random variables.

For a given $\omega$, let $\omega^\epsilon = T_\epsilon\omega$, where $T_\epsilon$ is the (Bonami--Beckner) noise operator: $\omega^\epsilon(i) = \omega(i)$ with probability $1-\epsilon$ whereas $\omega(i)$ is a new independent $\ber(p)$ variable with probability $\epsilon$, independently between the $N$ coordinates. 

Next, define $\omega_0$, $\omega_1$ and $\omega_1'$ as follows. Let
\begin{equation}\label{eq:p0}
p_0 := \frac{p(1-\epsilon)}{1-\epsilon p}\,,
\end{equation}
and let $\omega_0\sim\ber(p_0)^{\otimes N}$. Conditional on $\omega_0$, let $\omega_1$ and $\omega_1'$ be independent outcomes of sprinkling on $\omega_0$:
\begin{equation}\label{eq:omega}
\omega_1(i)=\begin{cases} 1 & \omega_0(i)=1\\
\ber(p_1) & \omega_0(i)=0
\end{cases}
\qquad\mbox{where}\qquad
p_1 := \epsilon p\,,
\end{equation}
independently among the $N-|\omega_0|$ coordinates where $\omega_0(i)=0$, and similarly---via another set of $N-|\omega_0|$ independent $\ber(p_1)$ variables---for $\omega_1'$. 
\begin{observation}\label{obs:coupling}
For any $0<\epsilon<1$ and $0<p<1$
we have $(\omega_1,\omega_1')\stackrel{\mathrm{d}}= (\omega_1,\omega_1^\epsilon)$.
\end{observation}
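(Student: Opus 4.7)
The key structural observation is that both constructions produce coordinate-independent pairs. Indeed, in the definition of $\omega^\epsilon=T_\epsilon\omega$, the resampling decision at each index is independent of the other indices, so $(\omega,\omega^\epsilon)$ is an i.i.d.\ product of $N$ copies of a single joint law on $\{0,1\}^2$. Likewise, conditional on $\omega_0$, the sprinkles producing $\omega_1$ and $\omega_1'$ are independent of each other and across coordinates, so (after integrating out $\omega_0$ coordinate-wise) the pair $(\omega_1,\omega_1')$ is also an i.i.d.\ product of a single joint law on $\{0,1\}^2$. It therefore suffices to verify the claim at a single coordinate.

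At a single coordinate, a joint distribution on $\{0,1\}^2$ is determined by its two marginals together with the probability of the cell $\{(1,1)\}$. My plan is thus to check two identities: (i) the marginal of $\omega_1$ is $\ber(p)$, matching those of $\omega$ and $\omega^\epsilon$, and (ii) $\P(\omega_1=1,\omega_1'=1)=\P(\omega=1,\omega^\epsilon=1)$. For (i), I would condition on $\omega_0$ to obtain $\P(\omega_1=1)=p_0+(1-p_0)p_1$; substituting $p_0=p(1-\epsilon)/(1-\epsilon p)$ and $p_1=\epsilon p$ gives exactly $p$, and the symmetric argument yields $\omega_1'\sim\ber(p)$.

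For (ii), the sprinkling construction gives $\P(\omega_1=1,\omega_1'=1)=p_0+(1-p_0)p_1^2$, since conditional on $\omega_0=1$ both sprinkled coordinates equal $1$, while conditional on $\omega_0=0$ the two sprinkles are independent $\ber(p_1)$ variables. On the other hand, by definition of $T_\epsilon$, $\P(\omega=1,\omega^\epsilon=1)=p\bigl[(1-\epsilon)+\epsilon p\bigr]$: given $\omega=1$, either the coordinate is preserved (probability $1-\epsilon$) or resampled to $1$ (probability $\epsilon p$). After clearing the common denominator $1-\epsilon p$, the required equality reduces to the polynomial identity
\[
p(1-\epsilon)+(1-p)\epsilon^2p^2 \;=\; \bigl(p(1-\epsilon)+\epsilon p^2\bigr)(1-\epsilon p),
\]
which expands to $p(1-\epsilon)+\epsilon^2 p^2(1-p)$ on both sides.

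There is essentially no obstacle here: once one recognizes the product structure reducing the statement to a single Bernoulli coordinate, the remainder is a one-line algebra check. An equivalent route would be to tabulate the full $2\times 2$ joint law for both pairs and verify term-by-term agreement; I prefer the route above because it isolates the single nontrivial identity and makes the symmetric role of $\omega_1'$ transparent.
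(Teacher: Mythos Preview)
Your proposal is correct and follows essentially the same approach as the paper: reduce to a single coordinate by the product structure, verify that each marginal is $\ber(p)$, and then match the $(1,1)$-cell probabilities (equivalently, the covariances) via the identity $p_0+(1-p_0)p_1^2=p(1-\epsilon+\epsilon p)$. The paper's proof is the same computation, phrased as ``it remains to verify that the covariances of a given coordinate match.''
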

\begin{proof}
Note that $\omega_1$  is distributed as $\ber(p)^{\otimes N}$ (as is $\omega_1'$) by our choice of $p_0,p_1$, as the $N$ variables are independent and $\P_n(\omega_1(i)=1) = p_0 + (1-p_0)p_1 = p$. Hence, such is also the distribution of $\omega_1^\epsilon$ (invariant under the noise operator). It thus remains to verify that the covariances of a given coordinate match. Indeed, for every~$i$,
\[ \P_n( \omega_1(i) = \omega_1'(i)=1) = p_0 + (1-p_0)p_1^2 =
p\frac{1-\epsilon+(1-p)\epsilon^2 p}{1-\epsilon p}=
p(1-\epsilon+\epsilon p)\,,\]
whereas
\[ \P_n( \omega_1(i) = \omega_1^\epsilon(i)=1) = p (1-\epsilon + \epsilon p)\,,\]
implying that the covariances indeed match, as required.
\end{proof}

\begin{corollary}\label{cor:common-core}
Let $f_n:\Omega_n\to\{0,1\}$. Then
\begin{equation}\label{eq:cond-var} \cov(f_n(\omega_1),f_n(\omega_1^\epsilon)) = \var\left(\P(f_n(\omega_1)=1\mid\omega_0)\right)\,.\end{equation}
In particular, if $\P_n(f_n(\omega_1)=1)\to\alpha$ as $n\to\infty$ for some $0<\alpha<1$, then the sequence
$(f_n)$ is $\epsilon$-noise sensitive if and only if $\P_n(f_n(\omega_1)=1\mid\omega_0)\xrightarrow{\,\mathrm{p}\,} \alpha$.
\end{corollary}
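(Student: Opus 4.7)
The plan is to use Observation~\ref{obs:coupling} to replace $\omega_1^\epsilon$ with $\omega_1'$ inside the covariance, and then apply the law of total covariance with the conditioning variable $\omega_0$. Concretely, since $(\omega_1,\omega_1^\epsilon)\stackrel{\mathrm d}{=}(\omega_1,\omega_1')$, we have
\[
\cov(f_n(\omega_1),f_n(\omega_1^\epsilon)) = \cov(f_n(\omega_1),f_n(\omega_1'))\,.
\]
By the construction in \eqref{eq:omega}, $\omega_1$ and $\omega_1'$ are conditionally independent given $\omega_0$ and have the same conditional distribution. Hence $\cov(f_n(\omega_1),f_n(\omega_1') \mid \omega_0)=0$ and $\E[f_n(\omega_1)\mid \omega_0] = \E[f_n(\omega_1')\mid\omega_0] = \P(f_n(\omega_1)=1\mid\omega_0)$ almost surely. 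The law of total covariance then yields
\[
\cov(f_n(\omega_1),f_n(\omega_1')) = \E\!\left[\cov(f_n(\omega_1),f_n(\omega_1')\mid \omega_0)\right] + \cov\!\left(\E[f_n(\omega_1)\mid\omega_0],\,\E[f_n(\omega_1')\mid\omega_0]\right),
\]
whose first term vanishes and whose second equals $\var(\P(f_n(\omega_1)=1\mid\omega_0))$, proving~\eqref{eq:cond-var}.

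For the second assertion, let $Y_n := \P(f_n(\omega_1)=1\mid\omega_0) \in [0,1]$. The hypothesis $\P(f_n(\omega_1)=1)\to\alpha$ gives $\E[Y_n]\to\alpha$, and since $\alpha\in(0,1)$ we also have $\var(f_n(\omega_1))\to\alpha(1-\alpha)>0$, so by the paper's definition noise sensitivity is equivalent to $\cov(f_n(\omega_1),f_n(\omega_1^\epsilon))\to 0$. By~\eqref{eq:cond-var} this is equivalent to $\var(Y_n)\to 0$. Writing
\[
\E\!\left[(Y_n-\alpha)^2\right] = \var(Y_n) + (\E[Y_n]-\alpha)^2,
\]
we see that $\var(Y_n)\to 0$ is equivalent to $Y_n \to \alpha$ in $L^2$, which, since $Y_n$ is uniformly bounded, is in turn equivalent to $Y_n \xrightarrow{\,\mathrm p\,} \alpha$ (one direction is Chebyshev; the other uses bounded convergence to upgrade convergence in probability to $L^2$ convergence of $Y_n^2$, which with $\E[Y_n]\to\alpha$ forces $\var(Y_n)\to 0$).

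There is no real obstacle here: the entire content is packaged in Observation~\ref{obs:coupling}, which ensures that sprinkling from the subcritical $\omega_0$ produces two conditionally independent copies with the joint law $(\omega_1,\omega_1^\epsilon)$. The rest is a direct application of the conditional variance decomposition together with the standard $L^2$/probability equivalence for bounded variables with a convergent mean.
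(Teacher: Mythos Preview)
Your proof is correct and follows essentially the same approach as the paper: replace $\omega_1^\epsilon$ by $\omega_1'$ via Observation~\ref{obs:coupling}, apply the law of total covariance conditioning on $\omega_0$ (the conditional covariance term vanishes by conditional independence), and then use that for uniformly bounded $Y_n$ with $\E[Y_n]\to\alpha$, the condition $\var(Y_n)\to 0$ is equivalent to $Y_n\xrightarrow{\,\mathrm p\,}\alpha$. Your writeup is slightly more explicit about the $L^2$/in-probability equivalence, but the argument is the same.
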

\begin{proof}
Eq.~\eqref{eq:cond-var} follows from  Observation~\ref{obs:coupling}, as the term $\cov(f_n(\omega_1), f_n(\omega_1') \mid \omega_0)$ in the law of total covariance for $\cov(f_n(\omega_1),f_n(\omega'_1))$ vanishes by the conditional independence of $\omega_1$ and $\omega_1'$ given $\omega_0$. Setting $X_n=\P(f_n(\omega_1)=1\mid \omega_0)$ for brevity, it thus follows by definition that if the (uniformly bounded) $X_n$ satisfies $X_n\xrightarrow{\,\mathrm{p}\,} \alpha$ then $\var(X_n)\to 0$, whence $(f_n)$ is $\epsilon$-noise sensitive. 

Conversely, if $(f_n)$ is $\epsilon$-noise sensitive then $\var(X_n)\to0$, so by Chebyshev's inequality 
$X_n - \E X_n \xrightarrow{\,\mathrm{p}\,}0 $, where $\E X_n = \P(f_n(\omega_1)=1)\to\alpha$ by assumption.
\end{proof}

If we further suppose that $X_n$ ($n\geq 1$) is a measurable map from $\Omega_n$ to a common topological space $\mathbb{S}$, which converges weakly to some limit $X_\infty$---denoting the measure $\P(X_\infty\in\cdot)$ on $\mathbb S$
is denoted $\mu_\infty$ for brevity---then we have the following.
\begin{corollary}
\label{cor:common-core-full}
Suppose that for each $n$ there exists $W_n\subseteq\Omega_n$ with $\P_n(W_n)\to 1$, the variables $X_n:\Omega_n\to\mathbb{S}$ have $X_n\xrightarrow{\,\mathrm d\,} X_\infty$, and for every $\mu_\infty$-continuity set~$S\subseteq\mathbb S$,
\begin{align}
     \lim_{n\to\infty} \E \left[\Big| \P_n( X_n(\omega_1) \in S \mid \omega_0) - \P(X_\infty\in S)\Big|^2\one_{\{\omega_0\in W_n\}}\right]
     = 0\,.
     \label{eq:common-core-full-hyp}
\end{align} 
Then $(X_n(\omega_1),X_n(\omega_1^\epsilon))\xrightarrow{\,\mathrm{d}\,}(X_\infty,X_\infty')$ as $n\to\infty$, where $X_\infty$ and $X'_\infty$ are i.i.d.
\end{corollary}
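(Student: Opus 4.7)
The plan is to mimic Corollary~\ref{cor:common-core} in the general setting of weak convergence on $\mathbb{S}$. First, by Observation~\ref{obs:coupling}, it suffices to establish joint convergence in distribution of $(X_n(\omega_1), X_n(\omega_1'))$ to $(X_\infty, X_\infty')$, where $\omega_1$ and $\omega_1'$ are conditionally i.i.d.\ given $\omega_0$ (each with law $\ber(p)^{\otimes N}$). This replaces the noise operator by an explicit conditional-independence structure, which is exactly what powers the conditioning step below.

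Next, I would reduce joint weak convergence on $\mathbb{S}\times\mathbb{S}$ to a check on rectangles $A\times B$ with $A,B$ being $\mu_\infty$-continuity sets. Since $\partial(A\times B)\subseteq(\partial A\times\mathbb{S})\cup(\mathbb{S}\times\partial B)$, any such rectangle is a $(\mu_\infty\otimes\mu_\infty)$-continuity set; and for a Polish target space---as is the case for $\selltwo$ and for $(\mathbb{L}_p,d_\GHP^p)$ in the intended applications---these rectangles form a convergence-determining class on the product.

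For the main calculation, fix $\mu_\infty$-continuity sets $A,B$, and set $\nu_n^S(\omega_0):=\P_n(X_n(\omega_1)\in S\mid\omega_0)$, noting that by the symmetry of the coupling one also has $\P_n(X_n(\omega_1')\in S\mid\omega_0)=\nu_n^S(\omega_0)$. Conditional independence of $\omega_1,\omega_1'$ given $\omega_0$ then gives
\[ \P_n\!\left(X_n(\omega_1)\in A,\;X_n(\omega_1')\in B\right)=\E\!\left[\nu_n^A(\omega_0)\,\nu_n^B(\omega_0)\right]. \]
The contribution of $W_n^c$ to this expectation is at most $\P_n(W_n^c)\to 0$ since the integrand lies in $[0,1]$, while on $W_n$ the hypothesis~\eqref{eq:common-core-full-hyp} delivers $L^2$-convergence of $\nu_n^A$ and $\nu_n^B$ to the constants $\mu_\infty(A)$ and $\mu_\infty(B)$ respectively. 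Since both variables lie in $[0,1]$, a routine triangle-inequality-plus-Cauchy--Schwarz bound yields $L^1$-convergence of the product on $W_n$ to $\mu_\infty(A)\mu_\infty(B)$, which is the desired limiting value for the rectangle.

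The only delicate point I anticipate is the convergence-determining step: verifying that the rectangles of continuity sets above suffice to detect weak convergence to the product measure. For Polish $\mathbb{S}$ this is classical, but if one prefers a self-contained route, one can test instead against $F(x,y)=\phi(x)\psi(y)$ for bounded continuous $\phi,\psi$, approximate each of $\phi,\psi$ uniformly by simple functions whose level sets avoid the at most countably many atoms of the push-forwards of $\mu_\infty$ under $\phi$ and $\psi$, and thereby reduce to the indicator case handled above.
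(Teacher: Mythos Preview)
Your proposal is correct and follows essentially the same approach as the paper: both reduce to rectangles $A\times B$ of $\mu_\infty$-continuity sets, use the conditional independence of $\omega_1,\omega_1'$ given $\omega_0$ (via Observation~\ref{obs:coupling}), and then control the resulting expression using the $L^2$-hypothesis on $W_n$ together with Cauchy--Schwarz. The paper phrases the last step as showing $\cov(\nu_n^A,\nu_n^B)\to 0$ via $\var(\nu_n^A)\to 0$, while you work directly with $\E[\nu_n^A\nu_n^B]$; these are equivalent, and you are somewhat more explicit than the paper about why rectangles of continuity sets form a convergence-determining class on the product.
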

\begin{proof}
Let $f_n(\omega) = \one_{\{X_n(\omega)\in S_1\}}$ and $g_n(\omega) = \one_{\{X_n(\omega) \in S_2\}}$ for two $\mu_\infty$-continuity sets $S_1,S_2\subseteq\mathbb S$, and let $\alpha_1 = \P(X_\infty \in S_1)$. By the same argument that led to~\eqref{eq:cond-var},
\[ \cov(f_n(\omega_1),g_n(\omega_1^\epsilon)) = \cov\left(\P(f_n(\omega_1)=1\mid\omega_0)\,,\,\P(g_n(\omega_1)=1\mid\omega_0)\right)\,,\]
which vanishes as $n\to\infty$ if $Y_n := \P(f_n(\omega_1)=1\mid \omega_0)$ has $\var(Y_n)\to 0$. Since
\[ \left| \var(Y_n) - \E\big[|Y_n - \alpha_1|^2\one_{\{\omega_0\in W_n\}}\big]\right| \leq 3|\E Y_n - \alpha_1| + \P_n(\omega_0\notin W_n)\] 
for the random variables $0\leq Y_n \leq 1$, and each term on the right vanishes as~$n\to\infty$ by the hypotheses $X_n\xrightarrow{\,\mathrm d\,} X_\infty$ and $\P_n(W_n)\to 1$, resp., the proof is concluded.
\end{proof}

\begin{remark}\label{rem:common-core-weaker} When applying Corollary~\ref{cor:common-core-full}, we will establish the stronger inequality
\[
     \lim_{n\to\infty} \max_{\omega\in W_n} \left| \P_n( X_n(\omega_1) \in S \mid \omega_0=\omega) - \P(X_\infty\in S)\right| = 0\quad\forall\text{$\mu_\infty$-\emph{cont.\ set} } S\subseteq\mathbb S\,.
\]
Compare this with the definition of \emph{1-strong noise sensitivity} of $f_n:\Omega_n\to\{0,1\}$: a 1-\emph{witness} to $f_n$ is a minimal subset $W$ such that $\omega\restriction_W = 1$ implies $f_n(\omega)=1$ deterministically; if $\cW_n$ is the set of all witnesses, strong noise sensitivity says that  
\[
     \lim_{n\to\infty} \max_{W\in \cW_n} \left| \P_n( f_n(\omega^\epsilon) =1 \mid \omega\restriction_{W}=1) - \P_n(f_n=1)\right| = 0\,.
\]
This stronger notion implies, and is not equivalent to, noise sensitivity (see~\cite{LS15}).
Observe that strong noise sensitivity addresses the effect of conditioning that a specific subset $W$ of the variables is open in a critical configuration $\omega$. On the other hand,  here we examine the bias due to the entire configuration of the subcritical $\omega_0$.
\end{remark}
\section{Component structure of the subcritical random graph}\label{sec:subcritical}
Let $p=(1+\lambda n^{-1/3})/n$ for a fixed $\lambda \in \R$,
and $\epsilon=\epsilon_n\in(0,1)$ such that $\epsilon^3 n\to\infty$.
Letting $G\sim\cG(n,p)$ be the analog of $\omega_1$ in
in~\S\ref{sec:common-core} for the $N=\binom{n}2$ edge variables, define $G_0$ analogously to $\omega_0$; that is, $G_0\sim \cG(n,p_0)$ where $p_0$, as per~\eqref{eq:p0}, is set to
\begin{align}
\label{eqn:p_0_spelled_out}
p_0 &:=\frac{(1-\epsilon)(1+\lambda n^{-1/3})}{n-\epsilon(1+\lambda n^{-1/3})} = 
\frac{(1-\epsilon)(1+\lambda n^{-1/3})}n +O(n^{-2})\,.
\end{align}
It will be convenient to further denote
\begin{align}\label{eq:theta}
\theta &:= 1 - np_0= (1-o(1))\epsilon\,,\qquad\mbox{ whence }\theta^3 n\to\infty\,.
\end{align}
Our proofs will rely on several concentration estimates for features  of the subcritical random graph $G_0$, which are known to determine the scaling limit of the component sizes, and moreover, that of the full metric spaces of connected components of $G_1$ conditioned on $G_0$. 
In particular, we will need the following results.
\begin{theorem}[{\cite[Thms.~3.3,3.4,4.1]{JL08}}]
\label{thm:Jan_Luc_susceptibility}
For $G_0$ be as above and $r=2$ or $r=3$,
\begin{align}\label{eq:E-C^r}
\E\Big[\sum_{i}|\cC_i(G_0)|^r\Big] &= \left(1+O\left(\tfrac{1-\theta}{\theta^3 n}\right)\right) \theta^{3-2r} n\,,\\
\label{eq:Var-C^r}
\var\Big(\sum_{i}|\cC_i(G_0)|^r\Big) &= O\left( \theta^{3-4r} n\right)\,.
\end{align}
\end{theorem}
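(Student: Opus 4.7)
The starting point is the vertex-sum identity $\sum_i|\cC_i(G_0)|^r=\sum_{v\in[n]}|\cC_v(G_0)|^{r-1}$, where $\cC_v$ denotes the connected component of~$v$. This reduces both statements to understanding $\E|\cC_v|^{r-1}$ and the pairwise covariances $\cov(|\cC_u|^{r-1},|\cC_v|^{r-1})$.

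For the mean~\eqref{eq:E-C^r}, the plan is to couple the breadth-first exploration of $\cC_v$ with a Galton--Watson tree of $\bin(n-1,p_0)$ offspring, whose mean $(n-1)p_0=1-\theta-O(1/n)$ places it subcritical with distance $\theta$ from criticality. Classical computations (e.g.\ via Otter's hitting-time formula together with a local CLT for the associated random walk with drift $-\theta$) give that the total progeny $T$ of such a tree satisfies $\E T^k\asymp \theta^{-(2k-1)}$; in particular $\E T\sim \theta^{-1}$ and $\E T^2\asymp\theta^{-3}$, matching the claimed orders for $r=2,3$. The technical step is bounding the BFS-vs.-branching coupling slack: sampling without replacement creates a per-step bias of order $O(T/n)$, which accumulates to a multiplicative correction of $O((1-\theta)/(\theta^3 n))$ once one multiplies typical cluster moments by $n$.

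For the variance~\eqref{eq:Var-C^r}, expand
$$\var\Big(\sum_v|\cC_v|^{r-1}\Big)=\sum_{u,v}\cov(|\cC_u|^{r-1},|\cC_v|^{r-1})$$
and split according to whether $u\in\cC_v$ or not. The same-cluster contribution reduces to $n\,\E|\cC_v|^{2r-1}$ minus a lower-order term, and by the moment asymptotics above the $(2r-1)$-th moment is of order $\theta^{-(4r-3)}=\theta^{3-4r}$, matching the target bound exactly. For the different-cluster terms, conditioning on $\cC_v=S$ leaves $[n]\setminus S$ distributed as $\cG(n-|S|,p_0)$, so the conditional $(r-1)$-th moment of $|\cC_u|$ for $u\notin S$ differs from the unconditional one by only a multiplicative $1+O(|S|/n)$ factor; summing over $u$ and $v$, these biases contribute at lower order.

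The principal obstacle is sharp control of the higher moments $\E T^{2r-1}$---namely $\E T^3\asymp\theta^{-5}$ and $\E T^5\asymp\theta^{-9}$---together with the BFS-vs.-branching coupling error at the matching order. Because these moments diverge as negative odd powers of $\theta\ll 1$, small coupling discrepancies in the tail regime $T\asymp\theta^{-2}$ (where large clusters contribute most to the higher moments) can in principle spoil the asymptotic constant, so one needs a uniform coupling bound that remains sharp throughout the whole exploration depth, not just for typical clusters. Janson and \L{}uczak circumvent some of this bookkeeping by working with an explicit integral representation of the susceptibility generating function and extracting moments by differentiation; nonetheless the essential combinatorial input---sub-exponential tails of subcritical cluster sizes with characteristic scale $\theta^{-2}$---is the same.
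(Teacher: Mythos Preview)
The paper does not prove this theorem at all: it is quoted verbatim from Janson and \L{}uczak~\cite{JL08} (their Theorems~3.3, 3.4 and~4.1), so there is no ``paper's own proof'' to compare against. Your sketch is therefore not a reproduction of anything in the present paper but rather an outline of the argument in the cited source.

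As an outline of the Janson--\L{}uczak proof, your proposal is broadly on target. The vertex-sum identity $\sum_i|\cC_i|^r=\sum_v|\cC_v|^{r-1}$, the subcritical branching approximation giving $\E T^k\asymp\theta^{1-2k}$, and the variance decomposition into same-cluster versus different-cluster pairs are exactly the ingredients used there. Your identification of the same-cluster contribution as $n\,\E|\cC_v|^{2r-1}\asymp n\theta^{3-4r}$ is correct and is indeed the dominant term. One small correction: in~\cite{JL08} the precise constants in~\eqref{eq:E-C^r} are obtained not via a generating-function integral representation but through the explicit tree-counting formula $\P(|\cC_v|=k)=\binom{n-1}{k-1}k^{k-2}p_0^{k-1}(1-p_0)^{k(n-k)+\binom{k}2-k+1}$ combined with Stirling, which gives the $1+O((1-\theta)/(\theta^3 n))$ error directly; the coupling-slack argument you describe would recover the order of the error but getting the sharp multiplicative form this way is more delicate than you suggest.
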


\begin{theorem}[{\cite{Bollobas84,Luczak90}}; {\cite[Cor.~5.11]{Bollobas01},\cite[Thm.~5.6]{JLR00}}]\label{thm:Bol-Luc-C1}
For $G_0$ as above,
\[ |\cC_1(G_0)| = \frac{\log(\theta^3 n)-5\log\log (\theta^3 n)+O_{\textsc{p}}(1)}{-\theta-\log(1-\theta)}\,.\]
\end{theorem}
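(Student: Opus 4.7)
The plan is to establish this sharp asymptotic for $|\cC_1(G_0)|$ by the classical first- and second-moment analysis of $N_k$, the number of components of size exactly $k$ in $G_0\sim\cG(n,p_0)$. Writing $\gamma := -\theta-\log(1-\theta)\sim\theta^2/2$, the strategy is to identify a critical threshold $k^*$ at which $\E[N_{\ge k}]$ passes through order $1$ and then show that $|\cC_1|=k^*+O_p(1/\gamma)$ by matching upper and lower tail bounds. In the deep subcritical regime $\theta^3 n\to\infty$, almost every component is a tree, so it suffices to work with tree components: a component with $\ell\ge 1$ excess edges is weighted by an extra factor $O(k^2 p_0)^\ell = O(k^2/n)^\ell$ per excess edge, which is negligible for $k$ in the relevant range.

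The key first-moment input is Cayley's formula: the expected number of $k$-vertex tree components equals $\binom{n}{k}k^{k-2}p_0^{k-1}(1-p_0)^{\binom{k}{2}-(k-1)+k(n-k)}$. Applying Stirling's approximation to $k!$ and expanding $\log(1-p_0)=-p_0+O(p_0^2)$ gives, uniformly for $k$ of order $\log(\theta^3 n)/\gamma$,
\begin{equation*}
 \E[N_k] \;=\; \bigl(1+o(1)\bigr)\,\frac{n}{\sqrt{2\pi}\,(1-\theta)\,k^{5/2}}\,e^{-k\gamma}\,,
\end{equation*}
and the near-geometric decay of successive terms by a factor $e^{-\gamma}\sim 1-\gamma$ yields $\E[N_{\ge k}]\asymp \E[N_k]/\gamma$. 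Markov's inequality $\P(|\cC_1|\ge k^+)\le \E[N_{\ge k^+}]$ then supplies the upper tail, by taking $k^+$ slightly above the leading-order value $\log(\theta^3 n)/\gamma$ and rewriting $\log n$, $\log(1/\gamma)$, and $\log k^+$ in terms of $\log(\theta^3 n)$ and $\log\log(\theta^3 n)$.

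For the matching lower bound I would apply Chebyshev's inequality to $N_{k^-}$ for a suitable $k^-$. The key observation is that disjoint $k$-vertex sets behave nearly independently as candidate tree components: in the joint probability that two disjoint $k$-sets both form tree components, the factor $(1-p_0)^{-k^2}$ (compensating for the double-counted absent edges across the two sets) cancels, up to $1+o(1)$, against the ratio $\binom{n-k}{k}/\binom{n}{k}=e^{-k^2/n+o(1)}$. Hence $\E[N_{k^-}(N_{k^-}-1)]=(1+o(1))\E[N_{k^-}]^2$, so $\var(N_{k^-})\ll \E[N_{k^-}]^2$ and Chebyshev yields $\P(N_{k^-}=0)\to 0$ whenever $\E[N_{k^-}]\to\infty$. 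Combining the two estimates pins $|\cC_1|\gamma$ down within an $O_p(1)$ window of the claimed expression.

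The main obstacle is the delicate bookkeeping needed to identify the precise coefficient in front of $\log\log(\theta^3 n)$: one must iterate the implicit equation $k\gamma=\log n-\log\gamma-\tfrac{5}{2}\log k+O(1)$ by substituting the leading-order value of $k$, then convert the resulting expression -- involving $\log n$, $\log(1/\gamma)\asymp 2\log(1/\theta)$, and $\log k\asymp\log\log(\theta^3 n)+\log(1/\gamma)$ -- into the form stated in the theorem. An additional technical point is to maintain uniformity of the $o(1)$ corrections, both in the Stirling expansion of $\E[N_k]$ and in the near-independence estimate for the second moment, throughout the full $O(1)/\gamma$ window of candidate $k$-values, including the borderline regime where $\theta$ tends to $0$ only barely faster than the critical rate $n^{-1/3}$ (in which case $k^2 p_0$ is no longer automatically $o(1)$ and the cancellation in the second moment must be tracked to higher order).
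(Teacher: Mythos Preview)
The paper does not prove this theorem at all: it is quoted as a known result, with citations to Bollob\'as~\cite{Bollobas84}, \L uczak~\cite{Luczak90}, and the textbooks~\cite{Bollobas01,JLR00}, and no proof is given in the paper itself. Your proposal is a faithful sketch of the classical argument from those references---the first-moment asymptotic for tree components via Cayley's formula and Stirling, Markov's inequality for the upper tail, and a second-moment/Chebyshev argument for the lower tail---so it is the ``same approach'' in the sense that it reproduces what the cited sources do.

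One small comment on your sketch: in the second-moment step you should be a bit careful that $k^2/n$ is not $o(1)$ in the regime $\theta\asymp n^{-1/3+o(1)}$ (indeed $k\asymp\theta^{-2}\log(\theta^3 n)$ can be as large as $n^{2/3-o(1)}$), so the cancellation between $(1-p_0)^{-k^2}$ and $\binom{n-k}{k}/\binom{n}{k}$ must be verified to the appropriate order rather than each factor being $1+o(1)$ separately; you flag this yourself at the end, and the cited references handle it.
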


\begin{theorem}[{\cite[Thm.~11]{Luczak98}}]\label{thm:Luc-diam}
For $G_0$ as above,
\[ \max_{i} \diam(\cC_i(G_0)) \leq \frac{\log(\theta^3 n) + O_{\textsc{p}}(1)}{-\log(1-\theta)}\,.\] 
\end{theorem}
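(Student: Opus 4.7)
The plan is to combine an upper bound on the maximum component size with a structural control on the diameter of each subcritical component. Theorem~\ref{thm:Bol-Luc-C1} gives $|\cC_1(G_0)|\leq (2+o_p(1))\log(\theta^3 n)/\theta^2$ (using $-\theta-\log(1-\theta)\sim\theta^2/2$), so every component has size at most $k^\ast:=C\log(\theta^3 n)/\theta^2$ for some absolute $C$ with high probability. It then suffices to bound the maximum diameter over components of size $\leq k^\ast$.

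I would estimate this via BFS from each vertex $v$: the level sizes $(Z_d)_{d\ge 0}$ are stochastically dominated by a Galton--Watson process with $\bin(n-1,p_0)$ offspring, close to $\Po(1-\theta)$. The survival function $u_d:=\P(Z_d\ge 1)$ obeys the iteration $u_{d+1}\approx 1-e^{-(1-\theta)u_d}$, which passes through a critical-like regime $u_d\sim 2/d$ while $u_d\gg\theta$ before crossing over at $d_\ast\sim 2/\theta$ to exponential decay $u_d\sim c\theta(1-\theta)^d$. A naive union bound over all $n$ starting vertices then only yields $\diam\leq \log(n\theta)/(-\log(1-\theta))+O_p(1)$, too weak by $\sim 2\log(1/\theta)/(-\log(1-\theta))$. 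The key refinement is that BFS depth exceeding $D\gg 1/\theta$ forces $|\cC(v)|>D$; by the sub-Gaussian subcritical component-size tail combined with the susceptibilities $\E\sum_i|\cC_i|^r$ of Theorem~\ref{thm:Jan_Luc_susceptibility}, the effective number of contributing vertices is only $O(\theta^3 n)$ rather than $n$---exactly the gain needed to sharpen the threshold to $\log(\theta^3 n)/(-\log(1-\theta))$.

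The main obstacle is rigorously quantifying this refinement uniformly in $D$ near the target threshold, since ``vertex in a large component'' is itself a random event whose distribution must be controlled. A clean implementation is a component-by-component first-moment calculation on induced paths of length $D$: a subcritical component of size $k$ contains $O(k)$ induced length-$D$ paths on average (it is tree-like with $O_p(1)$ excess edges by susceptibility), and integrating against the joint law of component sizes controlled by Theorems~\ref{thm:Jan_Luc_susceptibility} and~\ref{thm:Bol-Luc-C1} yields an expected total count of the form $\theta^3 n\cdot(1-\theta)^D$ up to polynomial corrections. Balancing this count to $O(1)$ extracts the $\log(\theta^3 n)$ inside the logarithm, and a Markov/Chebyshev argument then delivers the $O_p(1)$ slack in the numerator.
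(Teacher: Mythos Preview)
The paper does not prove this statement; Theorem~\ref{thm:Luc-diam} is quoted from \L{}uczak~1998 (specifically, Theorem~11 there) as an input to the analysis, with no argument given in the present paper. So there is no ``paper's own proof'' to compare against.

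As for your sketch on its own merits: the overall strategy (union bound over vertices, BFS dominated by a subcritical branching process, and the observation that the naive $n$-fold union bound must be sharpened to an effective $\theta^3 n$-fold one) is the right intuition and is indeed how \L{}uczak's argument proceeds. However, the implementation you outline in the last paragraph is loose. The assertion that a subcritical component of size $k$ contains ``$O(k)$ induced length-$D$ paths on average'' is not justified and is not obviously true in the form you need; even in a tree on $k$ vertices the number of length-$D$ paths can be of order $k^2/D$, not $O(k)$. The correct refinement in \L{}uczak's proof is not a component-by-component path count but rather a direct estimate on $\P(Z_D\geq 1)$ for the BFS process that already incorporates the crossover at $d_\ast\sim 2/\theta$ you identified, yielding $\P(Z_D\geq 1)\asymp \theta(1-\theta)^D$ for $D\gg 1/\theta$; multiplying by $n$ and setting this to $O(1)$ gives the $\log(n\theta)/(-\log(1-\theta))$ you flagged as too weak, but the extra $2\log(1/\theta)/\theta$ is absorbed because $-\log(1-\theta)\sim\theta$ and $\log(n\theta)=\log(\theta^3 n)+2\log(1/\theta)$, so in fact the naive bound already gives the stated result up to the $O_{\textsc p}(1)$ term once the survival probability is computed with the correct constant. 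Your ``too weak'' diagnosis miscounted the discrepancy.
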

We further need estimates on the average distance between all pairs of vertices that share a connected component, established by the next result.
\begin{lemma}\label{lem:subcritical_metric}
For $G_0$ as above, the random variable 
\[ Z:=\sum_{i} Z_i\qquad\mbox{where}\qquad Z_i := \sum_{u,v\in\cC_i(G_0)}\dist_{G_0}(u,v)\]
satisfies
\begin{align}
    \label{eq:E[Z]} 
    \E[Z]  &=  \left(1+O\left(\tfrac{1}{\theta^3 n}\right)\right)\frac{1-\theta}{\theta^2} n\,,
    \\
    \var(Z) &=O\left(\theta^{-7}n\right)\,.
    \label{eq:V(Z)}
\end{align}
\end{lemma}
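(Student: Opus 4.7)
The plan is to analyze $Z$ via the decomposition $Z = \sum_u L(u)$, where $L(u) := \sum_{v \in \cC(u)} \dist_{G_0}(u,v)$, and reduce both moment computations to estimates on the BFS exploration of $G_0$ and to the uniform-labeled-tree structure of subcritical components.

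For \eqref{eq:E[Z]}, by vertex exchangeability $\E[Z] = n\,\E[L(1)]$, and decomposing $L(1) = \sum_{k\geq 1} k\,|\partial B_k(1)|$ (where $\partial B_k(1)$ is the set of vertices at graph distance exactly $k$ from vertex $1$) reduces the task to estimating each $\E|\partial B_k(1)|$. Comparing the BFS on $G_0$ with a Galton--Watson branching process of offspring distribution $\bin(n-1, p_0)$---whose $k$-th generation has mean $((n-1)p_0)^k = (1-\theta)^k(1+O(1/n))^k$---yields the leading-order value $\E[L(1)] = (1-\theta)/\theta^2$ via the identity $\sum_{k\geq 1} k (1-\theta)^k = (1-\theta)/\theta^2$. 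The self-avoidance correction (already-explored vertices do not reappear in the BFS) is controlled by the fact that $|\cC(1)| = O_p(1/\theta)$ by Theorem~\ref{thm:Jan_Luc_susceptibility}, producing a relative error of $O(1/(\theta n))$ per layer; summed against the weight $k(1-\theta)^k$ over layers $k \lesssim 1/\theta$, this gives the relative error $O(1/(\theta^3 n))$ in~\eqref{eq:E[Z]}.

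For \eqref{eq:V(Z)}, write $Z = \sum_i Z_i$ with $Z_i = \sum_{u,v\in\cC_i}\dist_{G_0}(u,v)$, and apply the law of total variance conditional on the partition $\mathcal{P}$ of $[n]$ into connected components:
\[
\var(Z) = \E\!\left[\var(Z \mid \mathcal{P})\right] + \var\!\left(\E[Z \mid \mathcal{P}]\right) \leq \E\Big[\textstyle\sum_i Z_i^2\Big] + \var\Big(\textstyle\sum_i g(|\cC_i|)\Big),
\]
where $g(k) := \E[Z_i \mid |\cC_i|=k]$ and we used that the $Z_i$ are conditionally independent given $\mathcal{P}$. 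The crucial structural fact is that, conditional on $\cC_i$ forming a tree on its vertex set of size $k$, it is uniformly distributed over labeled trees on that set (since every such tree has the same probability in $\cG(n,p_0)$). The classical Wiener-index estimate for uniform Cayley trees gives $g(k) = \Theta(k^{5/2})$ and $\E[Z_i^2 \mid |\cC_i|=k] = \Theta(k^5)$, so the first term is bounded by $O(\E[\sum_i |\cC_i|^5])$ and the second by $O(\var(\sum_i |\cC_i|^{5/2}))$; both evaluate to $O(n/\theta^7)$ via the same moment techniques as Theorem~\ref{thm:Jan_Luc_susceptibility}, extended from $r \in \{2,3\}$ to $r \in \{5/2,5\}$.

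The main obstacle is this extension of the \cite{JL08} moment computations to $r=5/2$ and $r=5$, together with the control of rare non-tree components (which contribute negligibly, since each size-$k$ component is a tree with probability $1-O(k^2/n)$ and in any case $Z_i \leq |\cC_i|^3$ deterministically). Both steps are routine bookkeeping in the subcritical regime $\theta^3 n \to \infty$.
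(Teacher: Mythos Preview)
For the expectation \eqref{eq:E[Z]} your approach coincides with the paper's: both write $\E[Z]=n\sum_k k\,\E[X_k]$ with $X_k=|\partial B_k(1)|$ and compare $\E[X_k]$ with the branching-process value $(1-\theta)^k$. One minor imprecision: your ``$O(1/(\theta n))$ per layer'' misses a factor of $k^2$; the paper obtains the explicit lower bound $\E[X_k]\ge(1-\theta)^k-3k^2(1-\theta)^{k+1}/(\theta n)$ by induction on the BFS recursion (together with a second-moment bound $\E[X_lX_k]\le(1-\theta)^k/\theta$ from path counting), and it is the sum $\sum_k k\cdot k^2(1-\theta)^k\asymp\theta^{-4}$ that then produces the relative error $O(1/(\theta^3 n))$.

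For the variance \eqref{eq:V(Z)} your route is genuinely different. The paper never uses the Cayley-tree structure or Wiener-index moments: it handles the cross term $\E[\sum_{i\ne j}Z_iZ_j]-(\E Z)^2$ by conditioning on one component being a fixed set $A$ and applying the upper bound~\eqref{eqn:upperBoundEZ} to the remaining graph on $[n]\setminus A$, and it bounds $\E[\sum_iZ_i^2]$ via Cauchy--Schwarz ($Z_i^2\le|\cC_i|^2\sum_{u,v}\dist(u,v)^2$) followed by a direct path-counting estimate of the triple correlation $\E[X_rX_sX_t]\le 2(1-\theta)^t/\theta^2$ for $r\le s\le t$. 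Your partition-conditioning approach is conceptually clean and makes the exponent $7=2\cdot5-3=4\cdot\tfrac52-3$ transparent, but it imports external ingredients (Wiener-index moments of uniform labeled trees; the extension of the \cite{JL08} estimates to $r\in\{5/2,5\}$), whereas the paper's argument is entirely self-contained at the level of elementary path enumeration. One point to watch in your sketch: the step from $g(k)=\Theta(k^{5/2})$ to $\var\big(\sum_ig(|\cC_i|)\big)=O\big(\var(\sum_i|\cC_i|^{5/2})\big)$ is not automatic, since variance is not monotone under pointwise domination; you would need either a sharper expansion $g(k)=ck^{5/2}+O(k^2)$ or to run the \cite{JL08} variance argument directly for the function $g$ rather than for a pure power.
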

\begin{proof}
We may rewrite $Z =\sum_u \sum^*_v \dist_{G_0}(u,v)$, where the sum $\sum^*$ goes over every vertex $v$ in the same connected component of $u$. With this in mind, let the random variable $X_k=X_k(u)$ be the number of vertices  at distance~$k$ in~$G_0$ from a given vertex $u$. Then by the symmetry of the vertices in $G_0$ playing the role of $u$,
\[
\E[Z]= n\sum_{k=0}^{\infty} k\,\E[X_k].
\]
The upper bound on $\E[Z]$ will be immediate: we claim $\E[X_k] \leq (1-\theta)^k$, and so
\begin{equation}
    \label{eqn:upperBoundEZ}
    \E[Z]\leq n\sum_{k} k(1-\theta)^k = \frac{(1-\theta)n}{\theta^2}\,. 
\end{equation}
Indeed, for each $v$, we have that $\P(\dist_{G_0}(u,v)=k) \leq n^{k-1}p_0^k$ as there are less than $n^{k-1}$ paths of length $k$ from $u$ to $v$; summing over $v$ gives $\E[X_k]\leq (1-\theta)^k$.

For the lower bound on $\E[Z]$, we will show that
\begin{equation}\label{eq:E[Xk]-lower}
\E[X_k] \geq (1-\theta)^k - 3k^2 \frac{(1-\theta)^{k+1}}{\theta n}\,,
\end{equation}
which, together with~\eqref{eqn:upperBoundEZ}, will yield~\eqref{eq:E[Z]} since $\sum k^3(1-\theta)^k=(\theta^2+6(1-\theta))\frac{1-\theta}{\theta^{4}}$.

To this end, we will need to first estimate $\E[X_l X_k]$ for $l\leq k$; if $(v_1,v_2)$ are such that $\dist_{G_0}(u,v_1)=l$ and $\dist_{G_0}(u,v_2)=k$, then there must be a pair of paths $P_1$ from $u$ to $v_1$ and $P_2$ from $u$ to $v_2$, of lengths $l$ and $k$ respectively, which intersect only in their first $j$ edges for some $0\leq j \leq l$. (This is due to the fact that any sub-path of a shortest path is itself a shortest path, thus from $P_1,P_2$ without this property we may generate $P'_1,P'_2$ as above by replacing the $j$-prefix of $P_1$ with that of $P_2$, where $j$ is the maximal index such that the $j$-th edge of $P_1$ is also that of $P_2$). For each value of $0\le j \le l$ there are at most $n^{k+l-j}$ such pairs of paths $P_1,P_2$, and the probability that two given such paths both appear in $G_0$ is at most $p_0^{k+l-j}.$ Hence, 
\begin{equation}\label{eq:E[XlXk]} \E[X_lX_k]\leq \sum_{j=0}^l (1-\theta)^{k+j} \leq \frac{(1-\theta)^k}\theta\,.\end{equation}
Next, let $(\cF_k)$ be the filtration corresponding to revealing the first $k$ levels of the breadth-first-search tree of $u$ in $G_0$, wherein the variable $X_k$ corresponds to the number of vertices in level $k$ of the tree. Conditional on $\cF_{k-1}$, the random variable $X_k$ is distributed as $\bin(n-\sum_{l=0}^{k-1} X_l, \eta)$ for $\eta = 1-(1-p_0)^{X_{k-1}}$. We use 
\[ 1-(1-p_0)^{X_{k-1}} \geq X_{k-1}\,p_0-\tbinom{X_{k-1}}2 p_0^2 \] to find that
\begin{align*}
\E[X_k] \ge~& np_0\E[X_{k-1}] - p_0\sum_{l=0}^{k-1}\E\left[X_l X_{k-1}\right]- \frac12 n p_0^2\, \E[X_{k-1}^2]\\
\ge~ & (1-\theta)\E[X_{k-1}] - \frac{(1-\theta)^{k+1}}{\theta}(k/n+p_0),
\end{align*}
which, by induction on $k$ starting from $X_0=1$, satisfies~\eqref{eq:E[Xk]-lower}, as claimed.

We turn to the bound on $\var(Z)$. Let us denote by $\cC_A$, for every subset $A\subseteq [n]$, the event that $A$ is the vertex set of a connected component in $G_0$, and let \[ Z_A:=\one_{\cC_A} \sum_{u,v\in A}\dist(u,v)\,.\]
Clearly, 
$
\sum_{i\ne j}Z_iZ_j = \sum_{A\ne B} Z_AZ_B,
$  
and $Z=\sum_AZ_A.$ In addition, $Z_A Z_B=0$ for every $A,B$ such that $A\cap B\ne\emptyset$; hence,
\[
\sum_{A\ne B} Z_AZ_B = \sum_{A} Z_A\sum_{B\subseteq[n]\setminus A}Z_B.
\]
The the random variables $Z_A$ and $\sum_{B\subseteq[n]\setminus A}Z_B$ are conditionally independent on the event $\cC_A$, 
and the latter is the analog of $Z$ for a random graph $F \sim \cG(n-|A|,p_0)$ whence its expectation is bounded from above by the right-hand side of (\ref{eqn:upperBoundEZ}). Therefore,
\begin{align*}
\E\Big[\sum_{A\ne B} Z_AZ_B\Big] &= \sum_A \E[Z_A\mid \cC_A]\P(\cC_A)\E\Big[\sum_{B\subseteq[n]\setminus A}Z_B \given \cC_A\Big] \\ &\le  
\E[Z] \frac{(1-\theta)n}{\theta^2} \,.
\end{align*}
(Although $\E[\sum_{B} Z_B \one_{\cC_A}]\leq \E[Z]$, we could not have increased $\E[\sum_{B\subseteq A^c} Z_B\mid \cC_A]$ to $\E[Z]$ due to the conditioning, and instead applied the upper bound in~\eqref{eqn:upperBoundEZ} on~$F$.)
We use this inequality and~\eqref{eq:E[Z]} (twice) to find that 
\[ \E\Big[\sum_{i\neq j} Z_i Z_j \Big] - \E[Z]^2 \leq \E[Z] \frac{(1-\theta)n}{\theta^2} O\Big(\frac{1}{\theta^3 n}\Big) \leq O(\theta^{-7} n)\,.
\]
The proof of~\eqref{eq:V(Z)} will thus be concluded once we show that
 \begin{equation}
 \E\Big[\sum_{i}Z_i^2\Big] = O\left(\theta^{-7}n\right)\,.
 \label{eqn:Zi2}
 \end{equation}
By Cauchy--Schwartz,
 \[
 Z_i^2 \leq  |\cC_i(G_0)|^2\!\!\sum_{u,v\in\cC_i(G_0)}\dist(u,v)^2\,.
 \]
 Hence, again by symmetry of the vertices in $G_0$ playing the role of $u$, 
 \begin{equation}\label{eq-sum-Zi^2-bound}
 \E\Big[\sum_i Z_i^2 \Big] = n\E\bigg[ \Big(\sum_{k\geq 0} X_k\Big)^2
 \Big(\sum_{k\geq 0} k^2 X_k\Big)
 \bigg]=
 n\sum_{k,l,m \geq 0} k^2\E\left[X_k X_l X_m
 \right]\,.
 \end{equation}
We will treat $\E[X_r X_s X_t]$ for $r\leq s \leq t$ similarly to~\eqref{eq:E[XlXk]}, by enumerating over the triples of vertices $v_1,v_2,v_3$ at respective distances $r,s,t$ from a fixed vertex $u$. Consider shortest paths $P_i$ from $u$ to $v_i$ for $i=1,2,3,$. If $0 \leq j \leq s$ is the number of common edges between $P_2$ and $P_3$, and $0 \leq i \leq r$ is the number of common edges between $P_1$ and $P_2$, then given $P_3$ one can construct $P_2$ followed by $P_1$ such that the $j$-prefix of $P_2$ coincides with that of $P_3$, and then the $i$-prefix of $P_1$ coincides with that of $P_2$.
Thus, there are at most $n^{t+(s-j)+(r-i)}$ choices for such $P_1,P_2,P_3$, and by the same token, at most $n^{t+(s-j)+(r-i)}$ choices for $P_1,P_2,P_3$ where the $i$-prefix of $P_1$ coincides with that of $P_3$. Altogether,
\[ \E[X_r X_s X_t] \leq 2\sum_{i=0}^r \sum_{j=0}^s (np_0)^{t+(s-j)+(r-i)} \leq \frac{2(1-\theta)^t}{\theta^2}\,.
\]
Splitting $\sum_k k^2 \sum_{l,m} \E[X_k X_l X_m]$ in of~\eqref{eq-sum-Zi^2-bound} into  $l,m \leq k$ (where we use the last display) and $k < l \vee m$ (where we enumerate on $l\vee m$ and apply the same bound),
\[
 \E\Big[\sum_i Z_i^2 \Big] \le 
 2n\sum_{k=0}^{\infty}k^2\bigg( k^2\frac{(1-\theta)^k}{\theta^2} +2\sum_{l=k+1}^{\infty} \frac{l(1-\theta)^l}{\theta^2}\bigg)\,,
\]
which is $O(n \sum k^4 \frac{(1-\theta)^k}{\theta^2} ) = O(\frac{n}{\theta^{7}})$, giving~\eqref{eqn:Zi2} and thus concluding the proof.
\end{proof}

\section{Proof of Theorem~\ref{thm:1}}\label{sec:conv-X}
\subsection{Noise sensitivity: proof of Theorem~\ref{thm:1}, Part~(\ref{it:sens})}
Let $G_0, G_1, G_1',G_1^\epsilon$ be the analogs of $\omega_0,\omega_1,\omega_1',\omega^\epsilon$ as defined above for the $N=\binom{n}2$ edge variables that correspond to $\cG(n,p)$, using $p_0$ and $p_1$ as in~\eqref{eq:p0}--\eqref{eq:omega}.
Conditioned on~$G_0$, the connected components of $G_1$ are constructed by a multiplicative coalescent process where two components $\cC_i(G_0),\cC_j(G_0)$ will are in $G_1$ with probability \[ 1-(1-p_1)^{|\cC_i(G_0)||\cC_j(G_0)|}\,.\]
Aldous~\cite{Aldous97} considered the following related random graph. For a vector of positive weights ${\bf x}=(x_1,\ldots,x_N)$ and $q>0$, let $\fG\sim\scW({\bf x},q)$ be the random graph on the vertices $\{1,\ldots,N\}$ where each edge $ij$ is present with probability $1-\exp(-q x_i x_j)$ independently of other edges. The weight of a connected component $\cC$ of $\fG$ is defined as $\wt(\cC) = \sum_{i\in \cC} x_i$, and the components of $\fG$ are ordered $(\cC_1(\fG),\cC_2(\fG),\ldots)$ in decreasing order of weights. With these notations, Aldous showed the following.
\begin{proposition}[{\cite[Prop.~4]{Aldous97}}]\label{prop:aldous}
Fix $\lambda\in\R$. For each $N$, let ${\bf x}(N)=(x_1,\ldots,x_N)$ be a positive vector and $q(N)>0$ be so that, if $\sigma_r := \| {\bf x}\|_r^r$ and $x_{\max} := \|{\bf x}\|_\infty$ then
\begin{equation}
\frac{\sigma_3}{(\sigma_2)^3}\to 1\,,\qquad q - \frac{1}{\sigma_2}\to\lambda\,,\qquad \frac{x_{\max}}{\sigma_2}\to 0\qquad\mbox{as $N\to\infty$}\,.
\label{eqn:aldous_assumptions}
\end{equation}
Then the random graph $\fG\sim\scW({\bf x},q)$ satisfies
\[ (\wt(\cC_j(\fG))\,:\;j\geq 1)\xrightarrow[]{\,\mathrm{d}\,} \bxi(\lambda)\quad\mbox{as $N\to\infty$}\,.\]
\end{proposition}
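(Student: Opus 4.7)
The plan is to encode the components of $\fG \sim \scW(\mathbf{x},q)$ by a size-biased breadth-first exploration, reducing the claim to a functional limit theorem for the associated exploration walk together with a continuity argument for the excursion-length map. The target limit $W^{(\lambda)}_t = B_t + \lambda t - t^2/2$ is precisely the object whose reflected excursions define $\bxi(\lambda)$, so the route will be: build a walk $Z^N$ whose excursions list the component weights, prove $Z^N \xrightarrow{\,\mathrm d\,} W^{(\lambda)}$ in Skorohod space, and pass the excursion functional through the limit.

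To each vertex $i$ I would attach an independent clock $\xi_i \sim \mathrm{Exp}(x_i)$ and process vertices in the order of their clocks, performing breadth-first search in $\fG$ and opening a new root from the next unused vertex whenever the current component is exhausted. The memorylessness of the exponentials, combined with the $1-e^{-qx_ix_j}$ form of the edge probabilities, makes each step genuinely size-biased on the unexplored set. Encode the exploration by
\[
Z^N(s) = -s + \sum_{i=1}^N x_i\,\one_{\{\xi_i \le s\}},
\]
which rises by $x_i$ whenever clock $\xi_i$ fires and decreases at unit rate otherwise. A standard bookkeeping shows the excursions of $Z^N$ above its running infimum are in bijection with the components of $\fG$ in discovery order, the $j$th excursion having horizontal length equal to $\wt(\cC_j(\fG))$.

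Next I would decompose $Z^N = \mu^N + M^N$, where
\[
\mu^N(s) = \E Z^N(s) = -s + \sum_i x_i(1-e^{-x_is}) = (\sigma_2-1)s - \tfrac12 \sigma_3 s^2 + O\!\left(\sigma_4 s^3\right),
\]
is deterministic and $M^N$ is a compensated pure-jump martingale. Under the time rescaling $s = t/q$ and the hypotheses $q - 1/\sigma_2 \to \lambda$, $\sigma_3/\sigma_2^3 \to 1$, a direct expansion shows that $\mu^N(t/q)$ matches $\lambda t - t^2/2$ (up to lower-order corrections killed by $x_{\max}/\sigma_2 \to 0$), while the predictable quadratic variation of $M^N(t/q)$ converges to $t$. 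Since the jump sizes are bounded by $x_{\max}$ and $x_{\max}/\sigma_2 \to 0$ gives Lindeberg negligibility, a martingale functional CLT (e.g.\ Jacod--Shiryaev) yields $Z^N(t/q) \xrightarrow{\,\mathrm d\,} W^{(\lambda)}_t$ in the Skorohod $J_1$ topology. The excursion-length map is continuous at real-valued cadlag paths whose running infimum drifts to $-\infty$ with only finitely many excursions above any fixed threshold, a property of $W^{(\lambda)}$ holding almost surely thanks to the $-t^2/2$ parabolic drift, so the continuous mapping theorem converts walk-level convergence into convergence of the ordered component weights to $\bxi(\lambda)$.

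I expect the main obstacle to be upgrading weak convergence of finitely many leading excursions to convergence of the whole $\selltwo$ vector, since the continuous mapping theorem for the excursion map requires tightness of the excursion tails. Concretely, one needs that $\sum_{j\ge k}\wt(\cC_j(\fG))^2$ is negligible uniformly in $N$ as $k\to\infty$; this must be extracted from moment bounds on the walk's excursions, using that the super-linear downward drift $-t^2/2$ forces a uniform cap on when the walk has definitively escaped, and thereby limits how much total squared length can be carried by late, small excursions. Handling this tail tightness jointly with the Skorohod CLT is the technical heart of the argument.
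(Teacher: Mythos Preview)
This proposition is quoted from Aldous~\cite{Aldous97} and is not re-proved in the present paper; there is therefore no ``paper's own proof'' to compare against. Your overall plan---encode components via a size-biased breadth-first walk, prove a functional CLT toward $W^{(\lambda)}$, and pass the ordered-excursion map through the limit with a separate $\selltwo$-tightness argument for the tail---is exactly Aldous's strategy, and your identification of the tail-tightness step as the delicate part is accurate.

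That said, your specific encoding is off in a way that breaks both the coupling and the limit computation. With clocks $\xi_i\sim\mathrm{Exp}(x_i)$ independent of~$\fG$, the walk $Z^N(s)=-s+\sum_i x_i\one_{\{\xi_i\le s\}}$ carries no information about the edges of $\fG$, so its excursions cannot recover the component weights. Aldous's point is that the clocks \emph{simultaneously determine} the graph: one takes $\xi_i\sim\mathrm{Exp}(q x_i)$, so that during the exploration interval of a vertex $v$ (of length $x_v$) the clock of an unexplored vertex $i$ fires with probability $1-e^{-q x_i x_v}$, exactly the edge probability in $\scW(\mathbf{x},q)$. This is what makes the excursion lengths equal the component weights. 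With the correct rate $qx_i$, the drift of $Z^N$ at time $s$ is $(q\sigma_2-1)s-\tfrac12 q^2\sigma_3 s^2 \sim \sigma_2(\lambda s - s^2/2)$ and the quadratic variation is $\sim\sigma_2^2 s$, so one must further rescale \emph{vertically} by $q\sim 1/\sigma_2$ (leaving excursion lengths unchanged) to obtain $W^{(\lambda)}$; your proposed time change $s=t/q$ neither fixes the coupling nor yields the claimed drift $\lambda t - t^2/2$, as a direct check of $\mu^N(t/q)$ under $q\sim 1/\sigma_2$, $\sigma_2\to0$ shows both coefficients tend to~$0$.
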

Observe that if let $\fG\sim\scW({\bf x},q)$ for ${\bf x}=(x_i)_{i\geq 1}$ given by \begin{equation}\label{eq:xi-q-def}
x_i=n^{-2/3}|\cC_i(G_0)|\qquad\mbox{and}\qquad q=n^{4/3}(-\log{(1-p_1)})\end{equation} 
(i.e., the vertices of $\fG$ are in correspondence with the connected component of $G_0$), then by the definitions of $\scW({\bf x},q)$ and construction of $G_1$, conditioned on $G_0$,
\begin{equation}\label{eq:W(x,q)-G1-equiv}
\left(\wt(\cC_j(\fG))\,:\;j\geq 1\right) \stackrel{\mathrm{d}}= n^{-2/3}\left(|\cC_j(G_1)|\,:\;j\geq 1\right) \,.
\end{equation}
With this in mind, we move to verify that the three conditions in~\eqref{eqn:aldous_assumptions} that qualify an application of Proposition~\ref{prop:aldous} hold for $G_0$ w.h.p.

\begin{lemma}\label{lem:G0-aldous-cond}
Let $G_0\sim\cG(n,p_0)$ with $p_0$ as in~\eqref{eqn:p_0_spelled_out} for $\lambda\in \R$ fixed and $0<\epsilon(n)<1$ such that $\epsilon^3 n$ goes to $\infty$ with $n$. Let ${\bf x},q$ be as in~\eqref{eq:xi-q-def}. Then w.h.p.,
\begin{align}
\label{eq:G0-aldous-cond-1}	
&\Big|\frac{\sigma_3}{(\sigma_2)^3}- 1\Big|<  (\epsilon^3 n)^{-1/5}\,,\\
\label{eq:G0-aldous-cond-2}
&\Big|q-\frac{1}{\sigma_2}- \lambda\Big| <  (\epsilon^3 n)^{-1/15}\,,\\
\label{eq:G0-aldous-cond-3}
&\frac{x_{\max}}{\sigma_2} < 4(\epsilon^3 n)^{-1/3}\log (\epsilon^3 n)\,.
\end{align}
\end{lemma}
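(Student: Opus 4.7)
The plan is to derive all three estimates from Chebyshev-type concentration of the power sums $\sigma_r = n^{-2r/3}\sum_i |\cC_i(G_0)|^r$ ($r=2,3$), using Theorem~\ref{thm:Jan_Luc_susceptibility} as input, together with the upper tail for $|\cC_1(G_0)|$ from Theorem~\ref{thm:Bol-Luc-C1}. First I would record the preliminary concentration: Theorem~\ref{thm:Jan_Luc_susceptibility} gives $\E\sigma_r = (1+O((\theta^3 n)^{-1}))\,\theta^{3-2r}n^{1-2r/3}$ with relative standard deviation $O((\theta^3 n)^{-1/2})$, so Chebyshev delivers, for every fixed $\eta<1/2$,
\[\sigma_r = (1+O_{\textsc{p}}((\theta^3 n)^{-\eta}))\,\theta^{3-2r}n^{1-2r/3}\qquad(r=2,3),\]
and I can trade $\theta^3 n$ for $\epsilon^3 n$ freely by~\eqref{eq:theta}.

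With this in hand, \eqref{eq:G0-aldous-cond-1} and \eqref{eq:G0-aldous-cond-3} are quick. For \eqref{eq:G0-aldous-cond-1}, the deterministic ratio $\E\sigma_3/(\E\sigma_2)^3$ is already $1+O((\theta^3 n)^{-1})$, so inserting the previous display at $\eta=1/3$ gives $\sigma_3/\sigma_2^3-1=O_{\textsc{p}}((\epsilon^3 n)^{-1/3})$, comfortably $o((\epsilon^3 n)^{-1/5})$. For \eqref{eq:G0-aldous-cond-3}, Theorem~\ref{thm:Bol-Luc-C1} combined with $-\theta-\log(1-\theta)=\theta^2/2+O(\theta^3)$ yields $|\cC_1(G_0)|=(2+o_{\textsc{p}}(1))\,\theta^{-2}\log(\theta^3 n)$, and dividing by the concentrated $\sigma_2$ produces
\[\frac{x_{\max}}{\sigma_2} = (1+o_{\textsc{p}}(1))\,\theta n^{-1/3}|\cC_1(G_0)| = (2+o_{\textsc{p}}(1))(\theta^3 n)^{-1/3}\log(\theta^3 n)\,,\]
which is $<4(\epsilon^3 n)^{-1/3}\log(\epsilon^3 n)$ w.h.p.

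The substance of the proof, and the main obstacle, lies in \eqref{eq:G0-aldous-cond-2}, which requires a two-term cancellation between $q$ and $\theta n^{1/3}\approx 1/\sigma_2$. The plan is to Taylor-expand $-\log(1-p_1)$ with $p_1=\epsilon(1+\lambda n^{-1/3})/n$ to get
\[q = \epsilon n^{1/3} + \lambda\epsilon + O(\epsilon^2 n^{-2/3}),\]
and to unwind~\eqref{eqn:p_0_spelled_out} to get
\[\theta n^{1/3} = \epsilon n^{1/3} - \lambda(1-\epsilon) + O(\epsilon n^{-2/3}),\]
so that $q - \theta n^{1/3} - \lambda = O(\epsilon n^{-2/3}) = o((\epsilon^3 n)^{-1/15})$. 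Invoking the concentration display at $\eta=9/20$ gives $1/\sigma_2 - \theta n^{1/3} = O_{\textsc{p}}(\epsilon n^{1/3}(\epsilon^3 n)^{-9/20}) = O_{\textsc{p}}((\epsilon^3 n)^{-7/60})$, and since $-7/60<-4/60=-1/15$ this beats the target. The delicate part is the algebraic bookkeeping: both the leading $\epsilon n^{1/3}$ and the $\lambda$-corrections must cancel exactly between $q$ and $\theta n^{1/3}$ (the first cancellation is by construction of $p_0$, the second a separate check), and the Chebyshev error scale $\epsilon n^{1/3}(\epsilon^3 n)^{-\eta} = (\epsilon^3 n)^{1/3-\eta}$ forces me to take $\eta>2/5$ for the $(\epsilon^3 n)^{-1/15}$ target to survive -- which explains the asymmetric exponents appearing in the three claims.
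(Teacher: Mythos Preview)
Your argument is correct and follows essentially the same route as the paper's proof: Chebyshev via Theorem~\ref{thm:Jan_Luc_susceptibility} for $\sigma_2,\sigma_3$, the two-term expansion of $q$ and of $\theta n^{1/3}$ (the paper uses the fixed exponent $\eta=5/12$ where you take $\eta=9/20$) to obtain the cancellation in~\eqref{eq:G0-aldous-cond-2}, and Theorem~\ref{thm:Bol-Luc-C1} for $x_{\max}$. One small caveat: your equality $|\cC_1(G_0)|=(2+o_{\textsc p}(1))\theta^{-2}\log(\theta^3 n)$ is only valid when $\theta\to 0$, whereas the hypothesis allows $\epsilon$ to be a fixed constant; fortunately you only need the upper bound, which follows from $-\theta-\log(1-\theta)\geq\theta^2/2$ for all $0<\theta<1$ (the paper states this directly as an inequality).
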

\begin{proof}
Let $\theta=1-n p_0$ be as in~\eqref{eq:theta}, and write $\xi := \theta^3 n$ for brevity (recalling that $\theta = (1-o(1))\epsilon$ whence $\theta^3 n\to\infty$). We apply Theorem~\ref{thm:Jan_Luc_susceptibility} to find that
the variables $\sigma_2 = n^{-4/3}\sum_i |\cC_i(G_0)|^2$ and $\sigma_3 = n^{-2}\sum_i |\cC_i(G_0)|^3$ satisfy
 \begin{align*} \E[\sigma_2 ] 
 = \xi^{-1/3} + O(\xi^{-4/3})\,,
 \qquad &
  \E[\sigma_3] 
  = \xi^{-1} + O(\xi^{-2})
  \end{align*}
by~\eqref{eq:E-C^r}, whereas by~\eqref{eq:Var-C^r},
 \begin{align*} \var(\sigma_2) 
 = O(\xi^{-5/3})\,,
 \qquad &
  \var(\sigma_3) 
  = O(\xi^{-3})\,.
 \end{align*}
By Chebyshev's inequality, we see that the events
\begin{align*} \cE_2 = \{ |\sigma_2 - \xi^{-1/3}| \leq \xi^{-3/4} \}\,,\qquad&
\cE_3 = \{ |\sigma_3 - \xi^{-1}| \leq \xi^{-5/4}\}
\end{align*}
occur w.h.p., and specifically
\begin{align}\label{eq:P(E2)-P(E3)} 
\P( \cE_2^c ) \leq O(\xi^{-1/6}) = o(1)\,,\qquad &
\P( \cE_3^c ) \leq O(\xi^{-1/2}) = o(1)\,.
 \end{align}
For $G_0\in \cE_2 \cap \cE_3$ we immediately see that
 \[
  \frac{\sigma_3}{(\sigma_2)^3} = \frac{(1+O(\xi^{-1/4}))\xi^{-1}}{(1+O(\xi^{-5/12}))\xi^{-1}} = 1+O((\epsilon^3 n)^{-1/4})\,,
 \]
 and in particular~\eqref{eq:G0-aldous-cond-1} holds for large enough $n$. 
 For the second (somewhat subtler) inequality~\eqref{eq:G0-aldous-cond-2},  recall $\theta = \epsilon - (1-\epsilon)\lambda n^{-1/3} + O(1/n)$ and  $p_1=\epsilon p=\epsilon(1+\lambda n^{-1/3})/n$. Thus, for every $G_0\in\cE_2$,
\begin{align*}
q - \sigma_2^{-1} &= (1 + O(\epsilon/n)) n^{1/3} \epsilon (1+\lambda n^{-1/3}) - (1+O(\xi^{-5/12}))\theta n^{1/3} \\
&= \lambda + O\big(\xi^{-5/12}\theta n^{1/3}\big) + O( n^{-2/3}) = \lambda + O( (\epsilon^3 n)^{-1/12} )\,,
\end{align*}
yielding~\eqref{eq:G0-aldous-cond-2}.
Finally, from Theorem~\ref{thm:Bol-Luc-C1} we have  $n^{2/3} x_{\max} \leq  2( \log\xi + O_{\textsc{p}}(1))/\theta^2$, and so $G_0$ satisfies both $\cE_2$ and  $x_{\max} \leq 3 \xi^{-2/3}\log\xi$ w.h.p., in which case
\[ 
\frac{x_{\max}}{\sigma_2} \leq (3+o(1)) \xi^{-1/3}\log\xi < 4(\epsilon^3 n)^{-1/3}\log(\epsilon^3 n)
\]
for large  $n$, as claimed in~\eqref{eq:G0-aldous-cond-3}. This completes the proof.
\end{proof}
Combining Proposition~\ref{prop:aldous} with Lemma~\ref{lem:G0-aldous-cond} we find that for each $n\geq 1$ there exists a set of graphs $\cA_0^{(n)}$ such that $\P(G_0\in\cA_0^{(n)})\to 1$, and for every $G\in\cA_0^{(n)}$, the conditional law of $\bX(G_1)$ given that $G_0=G$ converges in distribution to $\bxi(\lambda)$. Namely, for every continuity set $S$ of $\P(\bxi(\lambda)\in\cdot)$,
\[
     \lim_{n\to\infty} \max_{G\in \cA_0^{(n)}} \left| \P_n\left( \bX_n(G_1) \in S \mid G_0=G\right) - \P(\bxi(\lambda)\in S)\right| = 0\,.
\]
In particular (cf.~Remark~\ref{rem:common-core-weaker}), the hypothesis~\eqref{eq:common-core-full-hyp} in Corollary~\ref{cor:common-core-full} holds true, and hence we may conclude from it that the pair $({\bf X}(G_1),{\bf X}(G_1^\epsilon))$ converges in distribution to a product of two i.i.d.\ copies of $\bxi(\lambda)$.
\qed

\subsection{Noise stability: proof of Theorem~\ref{thm:1}, Part~(\ref{it:stab})}
Recall the setup of $G_0,G_1,G_1',G_1^\epsilon$ from the previous sections. The hypothesis $\epsilon^3 n\to 0$ means that~$G_0$ is now also a critical random graph satisfying
$\nu_n^0=n^{-2/3}(|\cC_j(G_0)\,:\;j\ge 1) \xrightarrow{\mathrm d} \bxi(\lambda)$.
We need the following two results of Aldous~\cite{Aldous97} for the random graph model $\scW({\bf x},q)$.
Recall $\cC_j(\fG)$ is the component of a weighted graph $\fG$ with the $j$-th largest weight. 
\begin{lemma}[{\cite[Lem.~17]{Aldous97}}]\label{lem:Aldous-G-Gbar}
If $\fG,\bar\fG$ are weighted graphs on $N$ vertices with resp.\ weights ${\bf x}=(x_i)_{i=1}^N,\bar{\bf x}=(\bar{x}_i)_{i=1}^N$ such that $E(\fG)\subseteq E(\bar\fG)$ and $x_i\leq \bar{x}_i$ for all $i$, then
\[
\sum_{j}\left(\wt(\cC_j(\bar\fG))-\wt(\cC_j(\fG))\right)^2 \le
\sum_{j}\wt(\cC_j(\bar\fG))^2 -
\sum_{j}\wt(\cC_j(\fG))^2\,.
\]
\end{lemma}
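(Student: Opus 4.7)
\smallskip

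\textbf{Plan of proof.} Write $a_j=\wt(\cC_j(\fG))$ and $b_j=\wt(\cC_j(\bar\fG))$, sorted in decreasing order and padded by zeros. Expanding the square on the left gives $\sum_j(b_j-a_j)^2=\sum_jb_j^2-2\sum_ja_jb_j+\sum_ja_j^2$, so the claimed inequality is equivalent to the cross-term bound
\[
\sum_j a_j\,b_j \;\ge\; \sum_j a_j^2,\qquad\text{i.e.,}\qquad \sum_j a_j(b_j-a_j)\ge 0.
\]
Thus the entire task reduces to controlling the (signed) quantity $\sum_j a_j(b_j-a_j)$.

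The key observation is that the sequence $(b_j)$ weakly \emph{majorizes} $(a_j)$, meaning $B_k:=\sum_{i\le k}b_i\ge \sum_{i\le k}a_i=:A_k$ for every $k\ge 1$. To see this, recall that each component of $\bar\fG$ is, as a vertex set, a disjoint union of components of $\fG$ (since $E(\fG)\subseteq E(\bar\fG)$), and its $\bar\fG$-weight is at least the sum of the $\fG$-weights of the components it contains (since $\bar x_i\ge x_i$). Now fix $k$ and consider the $\bar\fG$-components containing at least one of the top $k$ $\fG$-components $\cC_1(\fG),\ldots,\cC_k(\fG)$: there are at most $k$ such distinct $\bar\fG$-components, and their total $\bar\fG$-weight is at least $\sum_{i\le k}a_i$. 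Since $b_1\ge b_2\ge\cdots$, the sum of the $b$-values over any $m\le k$ distinct indices is bounded above by $B_k$, which gives $B_k\ge A_k$.

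To conclude, apply Abel summation with $c_j:=b_j-a_j$ and partial sums $C_j=B_j-A_j$:
\[
\sum_{j=1}^N a_j c_j \;=\; a_N C_N + \sum_{j=1}^{N-1}(a_j-a_{j+1})C_j.
\]
The factor $a_j-a_{j+1}$ is nonnegative by the decreasing ordering, and $C_j\ge 0$ by the majorization step; the boundary term $a_NC_N$ is also nonnegative (and tends to $0$ since $a_N\to 0$ and $C_N\to \sum_i\bar x_i-\sum_i x_i<\infty$). Hence $\sum_j a_j(b_j-a_j)\ge 0$, which is exactly what was needed.

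\smallskip

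\textbf{Main obstacle.} The only nontrivial step is the majorization $B_k\ge A_k$. The naive attempt---claiming that the top $k$ $\bar\fG$-components must contain the top $k$ $\fG$-components---is false (merging can produce a large block out of several medium-sized $\fG$-pieces, crowding out a larger unmerged one). The correct bookkeeping, as above, tracks the $\bar\fG$-components that happen to contain the top-$k$ $\fG$-components and exploits the fact that there are at most $k$ of them to invoke the sortedness of $(b_j)$; this is the one place where the combinatorial structure of ``edges added and weights increased'' genuinely enters the argument, the rest being purely analytic.
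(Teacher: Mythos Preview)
Your proof is correct. The paper does not actually prove this lemma---it is quoted from Aldous~\cite{Aldous97} as a black box---so there is no in-paper argument to compare against. Your route (reduce to $\sum_j a_j b_j\ge\sum_j a_j^2$, establish the weak majorization $B_k\ge A_k$ via the containment of $\fG$-components in $\bar\fG$-components, then Abel-sum) is clean and self-contained. Two minor remarks: (i) the parenthetical about the boundary term ``tending to $0$'' is unnecessary here, since $N$ is finite and $a_N C_N\ge 0$ already suffices; (ii) your careful handling of the majorization step---tracking the at most $k$ $\bar\fG$-components containing the top $k$ $\fG$-components rather than asserting the false statement that the top-$k$ lists nest---is exactly the right move.
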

\begin{lemma}[{\cite[Lem.~20]{Aldous97}}]\label{lem:Aldous-sigma2}
The weighted random graph $\fG\sim\scW({\bf x},q)$ satisfies
\[
\P\Big(\sum_{j} \wt(\cC_j(\fG))^2 > s\Big) \leq 
\frac{q s \sigma_2}{s-\sigma_2}
\]
for every $s>\sigma_2=\sum_jx_j^2$.
\end{lemma}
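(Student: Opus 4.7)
The plan is to combine a \emph{shifted} Markov inequality with a first-moment estimate on $S := \sum_j \wt(\cC_j(\fG))^2$, exploiting the deterministic lower bound $S\geq\sigma_2$. The latter follows from expanding the squared component weights,
\[ S = \sum_j \Big(\sum_{i\in\cC_j}x_i\Big)^{\!2} \geq \sum_j\sum_{i\in\cC_j}x_i^2 = \sigma_2\,. \]
Applying Markov's inequality to the nonnegative variable $S-\sigma_2$ at the threshold $s-\sigma_2>0$ then produces the key reduction
\[ \P(S>s) \leq \frac{\E[S]-\sigma_2}{s-\sigma_2}\,. \]

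The second step is to bound $\E[S]-\sigma_2 = \sum_{i\ne j} x_ix_j\,\P(i\leftrightarrow j)$, where $i\leftrightarrow j$ denotes that $i$ and $j$ lie in the same component of~$\fG$. Assuming $q\sigma_2<1$, I would upper bound each connection probability by a union bound over self-avoiding paths $i=v_0,v_1,\ldots,v_\ell=j$: since each edge is present with probability $1-e^{-qx_{v_{r-1}}x_{v_r}}\leq q x_{v_{r-1}}x_{v_r}$, the probability that a given path appears is at most $q^\ell x_ix_j\prod_{r=1}^{\ell-1}x_{v_r}^2$, and summing freely over intermediate labels contributes a factor $\sigma_2^{\ell-1}$ per length. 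The resulting geometric series yields
\[ \P(i\leftrightarrow j) \leq \sum_{\ell\geq 1} q^\ell x_ix_j\sigma_2^{\ell-1} = \frac{qx_ix_j}{1-q\sigma_2}\,, \]
whence $\E[S]-\sigma_2 \leq q\sigma_2^2/(1-q\sigma_2)$.

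Substituting into the shifted Markov bound gives the (strictly stronger) estimate $\P(S>s)\leq q\sigma_2^2/[(s-\sigma_2)(1-q\sigma_2)]$. The stated form $qs\sigma_2/(s-\sigma_2)$ then follows since the ratio of the two bounds equals $\sigma_2/[s(1-q\sigma_2)]$, which is $\leq 1$ precisely when $s\geq \sigma_2/(1-q\sigma_2)$; an elementary check shows that for $s<\sigma_2/(1-q\sigma_2)$, or when $q\sigma_2\geq 1$ so that the path series diverges, one has $qs\sigma_2/(s-\sigma_2)\geq 1$ and the stated bound is vacuous. The main subtlety is the shifted Markov step: the naive $\P(S>s)\leq \E[S]/s$ would not deliver a $q\sigma_2$-type tail at large~$s$, so it is essential to subtract the deterministic baseline $\sigma_2=\sum_j x_j^2$ before invoking Markov. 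The path-enumeration step is standard subcritical branching-process-type counting and is the routine portion of the argument.
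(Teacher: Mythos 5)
Your proof is correct and self-contained. The paper states this lemma by citation only (Aldous~\cite{Aldous97}, Lemma~20), so there is no in-paper argument to compare against; Aldous's own derivation goes through the multiplicative-coalescent process structure, viewing $q\mapsto S(q)=\sum_j\wt(\cC_j(\fG))^2$ as a pure-jump Markov process, computing the generator acting on $S$, and stopping at the first time $S$ exceeds $s$. Your argument is instead a static combinatorial one. Both routes hinge, however, on the same essential observation: the deterministic lower bound $S\geq\sigma_2$ makes the shifted Markov inequality $\P(S>s)\leq\bigl(\E[S]-\sigma_2\bigr)/(s-\sigma_2)$ available, which is precisely what produces the $s-\sigma_2$ in the denominator; you are right that the naive $\E[S]/s$ gives nothing useful here. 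Your first-moment bound $\E[S]-\sigma_2=\sum_{i\neq j}x_ix_j\,\P(i\leftrightarrow j)\leq q\sigma_2^2/(1-q\sigma_2)$ via a union bound over simple paths is valid: letting the intermediate labels range freely over all vertices only over-counts, and distinct edges of a simple path are independent with each present with probability $1-e^{-qx_ux_v}\leq qx_ux_v$. This yields $\P(S>s)\leq q\sigma_2^2/\bigl[(s-\sigma_2)(1-q\sigma_2)\bigr]$ rather than literally $qs\sigma_2/(s-\sigma_2)$, but your closing comparison is correct: the former implies the latter exactly when $s(1-q\sigma_2)\geq\sigma_2$, while in the complementary range --- including $q\sigma_2\geq 1$, where your geometric series diverges --- one has $qs\sigma_2/(s-\sigma_2)>1$ and the stated bound is vacuous. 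What path counting buys is an entirely elementary argument requiring no process machinery; what Aldous's dynamical route buys is the stated form directly, without the final case split.
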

\begin{corollary}
\label{cor:stab_sizes}
If $\epsilon^3 n\to 0$ and $G_0,G_1$ are as above, then 
$\|\bX(G_1)-\bX(G_0)\|_2 \xrightarrow{\,\mathrm{p}\,} 0$.
\end{corollary}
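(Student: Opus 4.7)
The plan is to combine Lemmas \ref{lem:Aldous-G-Gbar} and \ref{lem:Aldous-sigma2} with the tightness of $\|\bX(G_0)\|_2^2$, which will follow because $G_0$ remains in Aldous's critical window under the hypothesis $\epsilon^3 n\to 0$.

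First I would apply Lemma \ref{lem:Aldous-G-Gbar} with common weights ${\bf x}=(x_i)_{i\geq 1}=(n^{-2/3}|\cC_i(G_0)|)_{i\geq 1}$, taking $\fG$ to be the empty graph on these vertices (so $\wt(\cC_j(\fG)) = x_j$) and $\bar\fG \sim \scW({\bf x},q)$ with $q = n^{4/3}(-\log(1-p_1))$. By the distributional identity \eqref{eq:W(x,q)-G1-equiv}, the ordered component weights of $\bar\fG$ can be coupled almost surely to $\bX(G_1)$ conditionally on $G_0$, so the lemma yields the a.s.\ inequality
\[
   \|\bX(G_1) - \bX(G_0)\|_2^2 \;\leq\; S, \qquad S := \|\bX(G_1)\|_2^2 - \|\bX(G_0)\|_2^2,
\]
with $S\ge 0$ (merging components only enlarges the sum of squared weights). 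It therefore suffices to show $S\xrightarrow{\mathrm{p}} 0$.

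For this, two ingredients are needed: (a) $q \to 0$, which follows since $q = (1+o(1))\epsilon p\, n^{4/3} = (1+o(1))(\epsilon^3 n)^{1/3}\to 0$ by hypothesis; and (b) tightness of $\sigma_2 := \|\bX(G_0)\|_2^2$, which holds because $np_0 = 1 + \lambda_n n^{-1/3}$ with $\lambda_n \to \lambda$ (using $\epsilon n^{1/3}\to 0$), so by Aldous's theorem $\sigma_2$ converges in distribution to $\|\bxi(\lambda)\|_2^2 < \infty$. With these in hand, Lemma \ref{lem:Aldous-sigma2} applied conditional on $G_0$ with $s = \sigma_2 + \delta$ gives, for every $\delta > 0$,
\[
   \P(S > \delta \mid G_0) \;\leq\; \frac{q(\sigma_2 + \delta)\sigma_2}{\delta}.
\]
Given $\eta,\delta > 0$, pick $M$ so that $\P(\sigma_2 > M) < \eta$ for $n$ large; then $\P(S > \delta) \leq \eta + qM(M+\delta)/\delta = \eta + o(1)$, and since $\eta$ is arbitrary, $S\xrightarrow{\mathrm{p}} 0$, completing the proof.

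The only real subtlety is the parameter bookkeeping—namely that $q = (\epsilon^3 n)^{1/3}(1+o(1))\to 0$ and that $\sigma_2$ is tight because $G_0$ stays within the critical window (with parameter $\lambda_n\to\lambda$). Once these are established, Lemmas \ref{lem:Aldous-G-Gbar} and \ref{lem:Aldous-sigma2} deliver the conclusion directly; no coupling beyond the standard one implicit in \eqref{eq:W(x,q)-G1-equiv} is required.
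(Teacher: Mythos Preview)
Your proof is correct and follows essentially the same route as the paper: set up $\fG$ as the edgeless weighted graph and $\bar\fG\sim\scW({\bf x},q)$ with the parameters of~\eqref{eq:xi-q-def}, apply Lemma~\ref{lem:Aldous-G-Gbar} and then Lemma~\ref{lem:Aldous-sigma2} with $s=\sigma_2+\delta$, and conclude via $q=(1+o(1))(\epsilon^3 n)^{1/3}\to 0$ together with tightness of $\sigma_2$ coming from Aldous's convergence for $G_0$ (which sits in the critical window since $\epsilon n^{1/3}\to 0$). The only cosmetic difference is that you name $S=\|\bX(G_1)\|_2^2-\|\bX(G_0)\|_2^2$ and prove $S\xrightarrow{\mathrm{p}}0$, whereas the paper bounds $\P(\|\bX(G_1)-\bX(G_0)\|_2^2>\delta\mid G_0)$ directly; the arguments are otherwise identical.
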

\begin{proof}
Let $\delta>0$. We let $\bar\fG\sim\scW({\bf x},q)$ for $\bf x$ and $q$ as in~\eqref{eq:xi-q-def}, noting that 
\[q = (1+o(1))\epsilon n^{1/3}\,,\]
while recalling that $\bX(G_1)\stackrel{\mathrm{d}}=n^{-2/3}(\wt(\cC_j(\bar\fG)))_{j\geq 1}$ conditioned on $G_0$ as per~\eqref{eq:W(x,q)-G1-equiv}.
Further let $\fG$ be the edgeless graph $\fG$ on a vertex set corresponding to the connected components of $G_0$ and the same weights ${\bf x}$ (whereby $\sum x_i^2 = \sum_j\wt(\cC_j(\fG))^2)$. 
 Applying Lemma~\ref{lem:Aldous-G-Gbar} for $\fG,\bar\fG$ we deduce that, conditioned on $G_0$,
\[ 
\P\big(\|\bX(G_1)-\bX(G_0)\|_2^2 > \delta\mid G_0\big) \leq \P\Big(\sum_{j}\wt(\cC_j(\bar\fG))^2>\sigma_2+\delta\Big)\,,
\]
at which point Lemma~\ref{lem:Aldous-sigma2} further shows that
\[
\P\big(\|\bX(G_1)-\bX(G_0)\|_2^2 > \delta\mid G_0\big) \leq \frac{q(\sigma_2+\delta)\sigma_2}{\delta}\,.
\]
Recall that $n^{-2/3}(|\cC_j(G_0)|\,:\;j\geq 1)$ converges weakly at $n\to\infty$ to a nontrivial limit in the $\ell^2$-topology, whence $\sigma_2 = n^{-4/3}\sum |\cC_j(G_0)|^2$ converges in distribution precisely to the $\ell^2$-norm, squared, of the random limit $\bxi(\lambda)$ in the Polish space~$\selltwo$. As such, $\|\bxi(\lambda)\|_2$ is tight, hence so is $\sigma_2^2$, while $q\to 0$ by assumption, and this completes the proof.
\end{proof}
Part~\eqref{it:stab} of Theorem~\ref{thm:1}  follows directly from Corollary~\ref{cor:stab_sizes}, applied to $(G_0,G_1)$ and again to $(G_0,G_1')$ (which is equal in law), and using that $(G_0,G_1^\epsilon)\stackrel{\mathrm{d}}=(G_0,G_1')$.
\qed

\section{Proof of Theorem~\ref{thm:2}}\label{sec:conv-M}

\subsection{Noise sensitivity: proof of Theorem~\ref{thm:2}, Part~(\ref{it:sens})}
Our approach here is similar to the proof of Theorem~\ref{thm:1}, building on a special case of the metric-space extension of Proposition~\ref{prop:aldous} that was proved in ~\cite{BBSW14}. In addition to ${\bf x}=(x_1,\ldots,x_N)$ and $q>0$ as before, let $\bH=(H_1,\ldots,H_N)$ be finite connected graphs. Define  $\fG\sim\scW(\mathbf{x},q)$ as before, whereby the weight of a connected component $\cC$ in it is $\wt(\cC)=\sum_{i\in\cC}x_i$. Let $\fG^\bH\sim\scW(\mathbf{x},\bH,q)$
be the (unweighted) random graph obtained by generating a weighted random graph $\fG\sim\scW(\mathbf{x},q)$, replacing each vertex $i$ of $\fG$ with the graph~$H_i$, and 
connecting a uniform random vertex of $H_i$ with a uniform random vertex of~$H_j$ for every edge~$ij\in E(\fG)$.
We let $\cC_j(\fG^\bH)$ be the component of $\fG^\bH$ corresponding to $\cC_j(\fG)$, the component of $\fG$ with the $j$-th largest weight, and view it as a measured metric space $\tilde{\sM}_j(\fG^\bH)$ via the scaled counting measure
\[
\mu_j(A) = \sum_{i\in \cC_j(\fG)} x_i\cdot\frac{|A\cap V(H_i)|}{|V(H_i)|}\qquad \mbox{for all $A\subseteq \cC_j(\fG^\bH)$}\,,
\]
(N.B.\ $\wt(\cC_j(\fG))$ coincides with its total measure), and the graph metric scaled by
\[ \mathfrak{s}(\mathbf{x},\bH) := \frac{(\sigma_2)^2}{\sigma_2+\sum_{i=1}^{N}x_i^2u_i}\,,\] 
where $\sigma_2=\|{\bf x}\|_2^2$ and  $u_i$ is the expected distance between two independent uniform random vertices in $H_i$. (We distinguish the notation $\tilde\sM_j(\fG^\bH)$ from $\sM_j(G)$ since the scaling factor here is a function of $\mathbf{x},\bH$ as opposed to scaling by $\mathfrak{s}=|V(G)|^{-1/3}$).

With this notation, following is the metric space analog due to~\cite{BBSW14} of Prop.~\ref{prop:aldous}.
\begin{proposition}[{\cite[Thm.~3.4]{BBSW14}}]
Fix $\lambda\in\R$. For each $N$, let ${\bf x}(N)=(x_1,\ldots,x_N)$ be a positive vector and $q(N)>0$ so that, if $\sigma_r:=\|{\bf x}\|_r^r$ and $x_{\max}:=\|{\bf x}\|_{\infty}$ then assumption (\ref{eqn:aldous_assumptions}) holds true. 
In addition, 
let ${\bf H}(N)=(H_1,\ldots,H_N)$ be a sequence of disjoint finite connected graphs so that, if $x_{\min}:=\min_{i}x_i$, $d_{\max}:=\max_i \diam(H_i)$, and $u_i$ is the expected distance of two independent uniform random vertices in $H_i$ then there exist some positive real numbers $r_0>0$ and $0<\eta_0<\frac12$ such that
\begin{equation}
\frac{x_{\max}}{(\sigma_2)^{3/2+\eta_0}}\to 0\,,\quad 
\frac{(\sigma_2)^{r_0}}{x_{\min}}\to 0\,,\quad 
\frac{(\sigma_2)^{3/2-\eta_0}d_{\max}}{\sum_{i=1}^{N}x_i^2u_i+\sigma_2}\to 0\,,\quad 
\frac{\sigma_2x_{\max}d_{\max}}{\sum_{i=1}^{N}x_i^2u_i}\to 0
\label{eqn:metric_assumptions}
\end{equation}
as $N\to\infty$. Then the random graph $\fG^\bH\sim\scW(\mathbf{x},\bH,q)$ satisfies, for every $j\ge 1$,
\[
\tilde\sM_j(\fG^\bH)\xrightarrow{\,\mathrm d\,} \fM_j(\lambda)\quad\mbox{as $N\to\infty$ in $(\scM,d_{\GHP})$}\,.\]
\label{prop:metric_bbsw}\end{proposition}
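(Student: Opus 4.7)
The plan is to prove convergence in two stages: first obtain the component-weight limit from Proposition~\ref{prop:aldous}, then upgrade to the full measured metric-space limit. Since assumption~\eqref{eqn:aldous_assumptions} is in force, Aldous's theorem immediately delivers $(\wt(\cC_j(\fG)))_j\xrightarrow{\mathrm{d}}\bxi(\lambda)$ in $\selltwo$, so the remaining content of the proposition is the metric-space upgrade: show that, after rescaling by $\mathfrak{s}(\mathbf{x},\bH)$, the $j$-th blob-substituted component $\cC_j(\fG^\bH)$ converges in $d_\GHP$ to the ABG continuum object $\fM_j(\lambda)$.

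The main tool I would use is a depth-first exploration of $\cC_j(\fG^\bH)$ that mirrors a skeleton exploration of $\cC_j(\fG)$: visit the super-vertices in depth-first order, and between each skeleton entry and exit of super-vertex $i$, traverse $H_i$ along a uniform random path. This yields a height process $H^N$ encoding pairwise distances (up to surplus edges), and the goal is Skorokhod convergence of $\mathfrak{s}(\mathbf{x},\bH)\cdot H^N$ to the reflected parabolically-drifted Brownian excursion underlying $\fM_j(\lambda)$. Each skeleton step contributes $1$ to $H^N$; each blob traversal contributes a random length with mean $u_i$ and maximum $d_{\max}$. Summed along the exploration, the mean total blob contribution per unit skeleton time equals $\sum_i x_i^2 u_i/\sigma_2$, and the normalization $\mathfrak{s}(\mathbf{x},\bH)=\sigma_2^2/(\sigma_2+\sum_i x_i^2 u_i)$ is exactly the time-change that aligns the $[0,\sigma_2^{-1}]$ skeleton-time interval with the unit-excursion interval of the limit.

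Conditions~\eqref{eqn:metric_assumptions} are precisely what permits the approximations in this scheme: the first two are strengthenings of~\eqref{eqn:aldous_assumptions} (ruling out dominant super-vertices and degenerately small weights, so that the skeleton-level convergence is uniform enough to transfer to metric statements); the third forces $d_{\max}$ to be negligible on the rescaled metric, so single-step blob fluctuations vanish; and the fourth bounds the variance of the total blob contribution relative to its mean. With the height process converging in Skorokhod sense, lifting to $d_\GHP$ convergence of $\tilde\sM_j(\fG^\bH)$ proceeds via tightness (from diameter and measure control) plus finite-dimensional convergence of distances between uniformly chosen points, as in~\cite{ABG12,ABGM17}; the scaled measure $\mu_j$ is by construction the pushforward of uniform measure on the exploration interval, so it converges automatically alongside the process.

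The principal obstacle will be the variance estimate for $\sum_i N_i^{(j)}\xi_i$, where $N_i^{(j)}$ counts exploration visits to blob $i$ and $\xi_i$ is its (random) traversal length: successive visits to the same blob are correlated through the skeleton topology, so a careful second-moment argument is required to bound these correlations and guarantee concentration around $\sum_i x_i^2 u_i$. The fourth condition $\sigma_2 x_{\max} d_{\max}/\sum_i x_i^2 u_i\to 0$ is precisely the uniform-integrability criterion that handles the worst-case blob contribution, while moment identities for $\scW(\mathbf{x},q)$ in the spirit of those in~\cite{JL08} give the required joint control on the $N_i^{(j)}$. Once these concentration bounds are in place, a coupling of the discrete depth-first walk with its continuum analog yields the stated convergence in $(\scM\!,d_\GHP)$.
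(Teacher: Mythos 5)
The paper does not prove this proposition---it is quoted essentially verbatim as Theorem~3.4 of Bhamidi, Broutin, Sen and Wang~\cite{BBSW14} and is used here as a black box, playing for metric spaces the role that Proposition~\ref{prop:aldous} plays for component sizes. There is therefore no ``paper's own proof'' against which to compare your attempt; the only thing the present paper contributes is the statement, the surrounding definitions of $\scW(\mathbf{x},\bH,q)$, $\tilde\sM_j$, and $\mathfrak{s}(\mathbf{x},\bH)$, and the subsequent verification (Lemma~\ref{lem:G0-metric-prop}) that $G_0\sim\cG(n,p_0)$ satisfies conditions~\eqref{eqn:metric_assumptions} w.h.p.

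As a sketch of how one would actually prove the BBSW theorem, your outline is directionally sound: a depth-first exploration that alternates skeleton steps with blob traversals is the architecture used in~\cite{ABG12} and its blob extension in~\cite{BBSW14}, you identify correctly why $\mathfrak{s}(\mathbf{x},\bH)=\sigma_2^2/(\sigma_2+\sum_i x_i^2 u_i)$ is the right normalization, and you correctly flag the concentration of $\sum_i N_i^{(j)}\xi_i$ around its mean as the principal technical hurdle, with the fourth condition in~\eqref{eqn:metric_assumptions} controlling worst-case blob contributions. That said, as a proof it has real gaps: the surplus-edge structure---which is what makes $\fM_j(\lambda)$ a quotient of a continuum tree rather than a tree, and whose correct coupling across the blob substitution is a genuine point of care in~\cite{BBSW14}---is not addressed at all; the Skorokhod convergence of the rescaled height process to the Brownian excursion with parabolic drift is asserted rather than argued; and the second-moment argument you acknowledge as needed is left unresolved. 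None of this matters for the logic of the present paper, where the result is invoked, not reproved.
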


The proof of Theorem~\ref{thm:2},~\eqref{it:sens2} consists of several ingredients. We start by setting
\begin{equation}\label{eq:x-q-H}
x_i=n^{-2/3}|\cC_i(G_0)|\,, \quad q=n^{4/3}(-\log{(1-p_1)})\,,\quad H_i=\cC_i(G_0)\,.\end{equation}
Noting that the $x_i$'s and $q$ are exactly as they were defined in~\eqref{eq:xi-q-def}, we immediately infer that the conditions in~\eqref{eqn:aldous_assumptions} hold for $G_0\sim\cG(n,p_0)$ w.h.p.\ using Lemma~\ref{lem:G0-aldous-cond}. The following result confirms that the additional conditions in~\eqref{eqn:metric_assumptions}, required for an application of Proposition~\ref{prop:metric_bbsw}, also hold for the graph $G_0$ w.h.p. 
\begin{lemma}\label{lem:G0-metric-prop}
Fix $\lambda\in\R$ and $\delta>0$. 
Let $G_0\sim\cG(n,p_0)$ with $p_0$ as in~\eqref{eqn:p_0_spelled_out} for $\epsilon(n)$ which satisfies $ n^{-1/3+\delta}\leq\epsilon \leq 1-n^{-1/2}$. Further define $(x_i)$ and $(H_i)$ as in~\eqref{eq:x-q-H}. Then for every fixed $0<\eta_0<\frac12$ and $r_0>1/\delta$, the following inequalities hold w.h.p.
\begin{align}
    \label{eq:G0-bbsw-1}
&\frac{x_{\max}}{(\sigma_2)^{3/2+\eta_0}} \leq 5 (\epsilon^3n)^{-(1-2\eta_0)/6}\log(\epsilon^3n)\,,\\
    \label{eq:G0-bbsw-2}
&\frac{(\sigma_2)^{r_0}}{x_{\min}}\le 2n^{-1/3}\,,\\
    \label{eq:G0-bbsw-3}
&\frac{(\sigma_2)^{3/2-\eta_0}d_{\max}}{\sigma_2+\sum_{i=1}^{m}x_i^2u_i}\le 
5(\epsilon^3n)^{-(1-2\eta_0)/6}\log(\epsilon^3n)\,,\\
    \label{eq:G0-bbsw-4}
&\frac{\sigma_2x_{\max}d_{\max}}{\sum_{i=1}^{m}x_i^2u_i}\le
10(\epsilon^3n)^{-2/3}\log^2(\epsilon^3 n)\;\wedge\;n^{-1/8}\,,\\
    \label{eq:G0-bbsw-5}
&\left|\mathfrak{s}(\mathbf{x},\bH) n^{1/3}-1\right| \leq 4(\epsilon^3n)^{-2/5}\,.
\end{align}
\end{lemma}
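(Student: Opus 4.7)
The plan is to leverage the concentration estimates of Section~\ref{sec:subcritical} to obtain high-probability asymptotic formulas for $\sigma_2$, $x_{\max}$, $x_{\min}$, $d_{\max}$, and $\sum_i x_i^2 u_i$, and then verify each of the five inequalities by direct algebra in the parameter $\xi=\theta^3n$, where $\theta=(1+o(1))\epsilon$. The hypothesis $\epsilon\ge n^{-1/3+\delta}$ yields $\xi\ge n^{3\delta}$, so $\xi$ grows polynomially in $n$ and powers of $\log\xi$ are absorbed by any fixed power of $n$.

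First, as in Lemma~\ref{lem:G0-aldous-cond}, I would apply Theorem~\ref{thm:Jan_Luc_susceptibility} with Chebyshev to obtain $\sigma_2=(1+O(\xi^{-5/12}))\xi^{-1/3}$ w.h.p., and Theorem~\ref{thm:Bol-Luc-C1} to get $x_{\max}\le (2+o(1))\xi^{-2/3}\log\xi$ w.h.p. Theorem~\ref{thm:Luc-diam} gives $d_{\max}\le (1+o(1))\log\xi/(-\log(1-\theta))$ w.h.p. A standard first/second moment argument shows that $G_0$ contains isolated vertices w.h.p.\ (their expected number is at least $ne^{-(1-\theta)}$), so $x_{\min}=n^{-2/3}$ w.h.p. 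The key new ingredient concerns $\sum_i x_i^2 u_i$: using the identity $|\cC_i|^2u_i=\sum_{u,v\in\cC_i}\dist(u,v)=Z_i$, we have $\sum_i x_i^2u_i=n^{-4/3}Z$, and Lemma~\ref{lem:subcritical_metric} combined with Chebyshev then yields $\sum_i x_i^2 u_i=(1+o(1))(1-\theta)n^{-1/3}/\theta^2$ w.h.p.

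With these in hand, the five inequalities reduce to elementary algebra. Inequality~\eqref{eq:G0-bbsw-5} follows by simplifying $\sigma_2^2\asymp n^{-2/3}\theta^{-2}$ and $\sigma_2+\sum_i x_i^2u_i\asymp n^{-1/3}\theta^{-2}$, giving $\mathfrak{s}(\mathbf{x},\bH)=(1+O(\xi^{-2/5}))n^{-1/3}$. Inequalities~\eqref{eq:G0-bbsw-1} and~\eqref{eq:G0-bbsw-3} each reduce to a ratio of the form $\xi^{-(1-2\eta_0)/6}\log\xi$ after direct substitution. For~\eqref{eq:G0-bbsw-2} one computes $\sigma_2^{r_0}/x_{\min}\le 2^{r_0}\xi^{-r_0/3}n^{2/3}\le 2^{r_0}n^{-r_0\delta+2/3}$, which is $O(n^{-1/3})$ because $r_0>1/\delta$ forces $r_0\delta>1$.

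The main obstacle I expect is~\eqref{eq:G0-bbsw-4}. Substitution gives
\[
\frac{\sigma_2 x_{\max} d_{\max}}{\sum_i x_i^2 u_i} \;\lesssim\; \frac{\log^2\xi}{\theta\,n^{2/3}\,\bigl(-(1-\theta)\log(1-\theta)\bigr)},
\]
which for $\theta\to 0$ simplifies to $(1+o(1))\xi^{-2/3}\log^2\xi$, yielding the first of the two claimed bounds. For the companion $n^{-1/8}$ bound I would split on the size of $\theta$: when $\theta$ is bounded away from $0$, the factor $-(1-\theta)\log(1-\theta)$ is of at least polynomial size (using $1-\theta\ge n^{-1/2}$), and the denominator dominates the $\log^2\xi$ numerator by a polynomial factor in $n$; when $\theta$ is small, the first bound $\xi^{-2/3}\log^2\xi$ is the tighter of the two and is invoked instead. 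Tracking the uniform constants in these estimates across the full regime $n^{-1/3+\delta}\le\epsilon\le 1-n^{-1/2}$ is the most delicate part of the verification.
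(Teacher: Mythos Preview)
Your proposal is correct and follows essentially the same route as the paper: concentration of $\sigma_2$ via Theorem~\ref{thm:Jan_Luc_susceptibility}, of $x_{\max}$ via Theorem~\ref{thm:Bol-Luc-C1}, of $d_{\max}$ via Theorem~\ref{thm:Luc-diam}, and of $\sum_i x_i^2 u_i = n^{-4/3}Z$ via Lemma~\ref{lem:subcritical_metric}, followed by direct substitution and a case split on the size of~$\epsilon$ for~\eqref{eq:G0-bbsw-4}. One minor simplification: for~\eqref{eq:G0-bbsw-2} you do not need isolated vertices, since $x_{\min}\ge n^{-2/3}$ holds deterministically, and that lower bound is all that is required.
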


\begin{proof}
Observe that the random variable $\sum_{i=1}^{N}x_i^2u_i$ is equal to $n^{-4/3}Z$ for $Z$ as defined in Lemma~\ref{lem:subcritical_metric}. We recall that $\theta = 1-np_0=\epsilon+O(n^{-1/3})=(1+o(1))\epsilon$ as per~\eqref{eq:theta}, and can therefore infer from that lemma that
\begin{align*}
\E\Big[\sum_ix_i^2u_i\Big] &=\left(1+O\big((\epsilon^3 n)^{-1/3}\big)\right) \frac{1-\theta}{\theta^2 n^{1/3}}\,,\\
\var\Big(\sum_ix_i^2u_i\Big) &= O\left(\epsilon^{-7}n^{-5/3}\right)\,.
\end{align*}
By Chebyshev's inequality, 
\begin{equation}
\label{eqn:sumxi2ui}
\P\bigg( \Big| \sum_ix_i^2u_i - \frac{1-\theta}{\theta^2 n^{1/3}}\Big| > \frac{1}{(\epsilon^3 n)^{2/5}\epsilon^2 n^{1/3}}\bigg) = O\left( (\epsilon^3 n)^{-1/5}\right) = o(1)\,.
\end{equation}
We derive the proof from this bound, along with those of Lemma~\ref{lem:G0-aldous-cond} and the fact  \[ d_{\max}\leq 2\epsilon^{-1} \log(\epsilon^3 n)\,,\]
which holds w.h.p.\ by Theorem~\ref{thm:Luc-diam},
as  $d_{\max} \leq  ( \log(\epsilon^3 n) + O_{\textsc{p}}(1))/\theta$.

For~\eqref{eq:G0-bbsw-1}, recall from~\eqref{eq:P(E2)-P(E3)} that, w.h.p., the event $\cE_2$ holds, on which one has that $\sigma_2  = (1+o(1)) (\theta^3 n)^{-1/3} = (1+o(1))(\epsilon^3 n)^{-1/3}$.
Plugging 
\[ (\sigma_2)^{\frac12+\eta_0} = (1+o(1))(\epsilon^3 n)^{-\frac16-\frac{\eta_0}3}\] in~\eqref{eq:G0-aldous-cond-3}, we immediately see that~\eqref{eq:G0-bbsw-1} holds w.h.p.

For~\eqref{eq:G0-bbsw-2}, on the aforementioned event $\cE_2$ we have $(\sigma_2)^{r_0 } \le 2(\epsilon^3 n)^{-r_0/3} \leq 2n^{-\delta r_0}$ for large enough $n$, by the assumption $\epsilon\geq n^{-1/3+\delta}$ (this is the only place in the proof where we use this stronger assumption rather than requiring that $\epsilon^3 n\to \infty$); since $r_0\geq 1/\delta$,   
whereas $x_{\min}\ge n^{-2/3}$ holds deterministically, we arrive at~\eqref{eq:G0-bbsw-2}.

For~\eqref{eq:G0-bbsw-3}, the above bounds on $\sigma_2$ and $d_{\max}$ show that, w.h.p.,
\[ (\sigma_2)^{3/2-\eta_0} d_{\max} \leq (2+o(1))\epsilon^{-1}(\epsilon^3 n)^{-1/2+\eta_0/3}\log(\epsilon^3 n)\,,
\]
while $\sum_ix_i^2 u_i = (1+o(1))(1-\epsilon)\epsilon^{-2}n^{-1/3}$ and $\sigma_2 = (1+o(1))\epsilon^{-1}n^{-1/3}$. If $\epsilon<\frac12$ then $\sum_i x_i^2 u_i\geq (\frac12 -o(1))\epsilon^{-2}n^{-1/3}$, while if $\epsilon\geq \frac12$ then $\sigma_2 \geq (\frac12-o(1)) \epsilon^{-2}n^{-1/3}$, hence in both cases
we can combine this bound with the last display to obtain~\eqref{eq:G0-bbsw-3}.

The bound in~\eqref{eq:G0-bbsw-4} follows from plugging in the above bounds on  $\sigma_2$ and $d_{\max}$ in~\eqref{eq:G0-aldous-cond-3}, along with the asymptotics $\sum_i x_i^2 u_i$ established above, to see that w.h.p.
\[\frac{\sigma_2 x_{\max} d_{\max}}{\sum_i x_i^2 u_i}  \leq (8+o(1))\frac{\epsilon^{-1} (\epsilon^3 n)^{-1}\log^2(\epsilon^3 n)}{(1-\epsilon)\epsilon^{-2} n^{-1/3}}=\frac{8+o(1)}{1-\epsilon}(\epsilon^3 n)^{-2/3}\log^2(\epsilon^3 n)\,.
\]
This gives the first bound in the right-hand of~\eqref{eq:G0-bbsw-4} if we have $\epsilon<\frac1{10}$, while in the case $\frac1{10}\leq \epsilon \leq 1-n^{-1/2}$ we arrive at the bound $n^{-1/6+o(1)}<n^{-1/8}$ for large $n$.

To obtain the final inequality~\eqref{eq:G0-bbsw-5}, we combine the bound~\eqref{eqn:sumxi2ui} with the fact that $|\sigma_2 - (\theta n^{1/3})^{-1}|\leq (\theta^3 n)^{-3/4}$ on the event~$\cE_2$, and deduce that w.h.p.,
\[
\Big| \sigma_2 + \sum_i x_i^2 u_i - \frac1{\theta^{2}n^{1/3}}\Big| \leq \frac1{(\theta^3 n)^{3/4}}+\frac{(\epsilon^3 n)^{-2/5}}{\epsilon^2 n^{1/3}} \leq
 \frac{2(\epsilon^3 n)^{-2/5}}{\epsilon^{2}n^{1/3}}\,,
\]
where we absorbed the term $(1+o(1))\epsilon(\epsilon^3 n)^{-5/12}$ into $(\epsilon^3 n)^{-2/5}$ for large enough~$n$.
The above bound on $\sigma_2$ given the event $\cE_2$ further implies that
\[
\Big|\frac{(\sigma_2)^2}{ (\theta^3 n)^{-2/3}} - 1\Big| \leq (2+o(1))(\theta^3 n)^{-5/12} < (\epsilon^3 n)^{-2/5}
\] 
for a sufficiently large $n$. Combined,  we obtain that $\mathfrak{s}(\mathbf{x},\bH)$ is bounded w.h.p.\ via
\[
 \bigg|\frac{(\sigma_2)^2 n^{1/3}}{\sigma_2+\sum_i x_i^2u_i}-1\bigg| \leq \frac{3(\epsilon^3 n)^{-2/5}}{1-2(\epsilon^3 n)^{-2/5}} \leq 4(\epsilon^3 n)^{-2/5}\,,
\]
concluding the proof.
\end{proof}

The second step in the proof is to argue that w.h.p.\ over the random graph $G_0$, the graphs $G_1$ conditioned on $G_0$ and $\fG^\bH\sim\scW(\mathbf{x},\bH,q)$ can be coupled such that $d_{\GHP}(\sM_j(G_1),\tilde\sM_j(\fG^\bH)) \xrightarrow{\,\mathrm{p}\,} 0$. This is the (approximate) metric analogue of the distributional equality~\eqref{eq:W(x,q)-G1-equiv} that was used in the proof of Theorem~\ref{thm:1}. The reason that such a distributional equality does not hold here is twofold. First, $G_1$ can contain multiple edges between two components of $G_0$ as well as additional edges within a component of $G_0$. Second, the scaling factor $\mathfrak{s}(\mathbf{x},\bH)$ used for $\tilde\sM_j(\fG^\bH)$ differs from the $n^{-1/3}$ scaling of $\dist_{G_1}$ for $\sM_j(G_1)$. 
On the other hand, note that for the choice of $\mathbf{x},\bH$ as in~\eqref{eq:x-q-H}, the measure associated with both spaces is the same, namely, the counting measure multiplied by $n^{-2/3}$ (as the factor $|V(H_i)|$ in the definition of $\mu_j$ cancels out with the one in $x_i$).

\begin{lemma}
\label{lem:coupling_metric_spaces}
Fix $\lambda\in\R$. Suppose that $p = (1+\lambda n^{-1/3})/n$, let $0<\epsilon<1$ be such that $\epsilon^3 n \to \infty$ as $n\to\infty$, and define $G_0,G_1$ to be as above. Then one can construct a random graph $\fG^\bH$ such that  $G_0\subseteq\fG^\bH\subseteq G_1$ and so that the following hold.
\begin{enumerate}[(i)]
    \item $\fG^\bH\sim\scW(\mathbf{x},\bH,q)$ where $\mathbf{x}$, $\bH$ and $q$ are as in~\eqref{eq:x-q-H}.
    \item There exist a set of graphs $\cB_0^{(n)}$ such that $\P(G_0\in \cB_0^{(n)})\to 1$ as $n\to\infty$, and for every graph $G\in\cB_0^{(n)}$ and $j\ge 1$, 
    \[\E\Big[d_{\GHP}\big(\sM_j(G_1),\tilde\sM_j(\fG^\bH)\big) \given G_0=G\Big]\to 0 \qquad\mbox{as $n\to\infty$}\,.\]
\end{enumerate}
\end{lemma}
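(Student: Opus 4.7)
The plan is to build $\fG^\bH$ by coupling with a single sprinkling step on $G_0$, then pruning down to the minimal subgraph realizing the $\scW(\mathbf{x},\bH,q)$-law. Let $\cE:=G_1\setminus G_0$ be the sprinkled edges (each non-edge of $G_0$ lies in $\cE$ independently with probability $p_1$), and for each unordered pair $\{\cC_i,\cC_j\}$ of distinct $G_0$-components let $\cE_{ij}\subseteq\cE$ be the sprinkled edges between them; whenever $\cE_{ij}\ne\emptyset$, independently choose $e_{ij}\in\cE_{ij}$ uniformly at random, and set $\fG^\bH:=G_0\cup\{e_{ij}:\cE_{ij}\ne\emptyset\}$. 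Clearly $G_0\subseteq\fG^\bH\subseteq G_1$, and $\fG^\bH,G_1$ share the same connected components (two $G_0$-components merge in either iff $\cE_{ij}\ne\emptyset$). To verify $\fG^\bH\sim\scW(\mathbf{x},\bH,q)$: our choice of $q$ gives $\P(\cE_{ij}\ne\emptyset)=1-(1-p_1)^{|\cC_i||\cC_j|}=1-\exp(-qx_ix_j)$; conditionally on $\cE_{ij}\ne\emptyset$ the endpoints of $e_{ij}$ are uniform over $\cC_i\times\cC_j$ by symmetry of the sprinkling; and these events/choices are jointly independent across pairs since the sprinkling acts on disjoint edge sets.

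For the metric comparison, both $\sM_j(G_1)$ and $\tilde\sM_j(\fG^\bH)$ share the vertex set $\cC_j(G_1)$ and the measure $n^{-2/3}\cdot\#\{\cdot\}$ (the factor $|H_i|$ in $\mu_j$ cancels with $x_i=n^{-2/3}|\cC_i|$), so using the identity correspondence,
\[
d_{\GHP}(\sM_j(G_1),\tilde\sM_j(\fG^\bH))\le\tfrac12\sup_{u,v\in\cC_j(G_1)}\bigl|\alpha\,d_{\fG^\bH}(u,v)-\beta\,d_{G_1}(u,v)\bigr|,
\]
with $\alpha:=\mathfrak{s}(\mathbf{x},\bH)$ and $\beta:=n^{-1/3}$. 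Writing $k:=|G_1\setminus\fG^\bH|$ and $d_{\max}:=\max_i\diam(\cC_i(G_0))$, the central estimate will be $d_{\fG^\bH}(u,v)-d_{G_1}(u,v)\le 2kd_{\max}$, obtained by replacing each extra edge in a $G_1$-geodesic by a detour in $\fG^\bH$ of length at most $2d_{\max}+1$: a within-$\cC_i$ extra by a $\cC_i$-path, and a between-$\cC_i$-$\cC_j$ extra by routing within $\cC_i$, across the already-chosen edge $e_{ij}\in\fG^\bH$, and within $\cC_j$. Combining with $|\alpha-\beta|/\beta\le 4(\epsilon^3n)^{-2/5}$ from~\eqref{eq:G0-bbsw-5},
\[
d_{\GHP}(\sM_j(G_1),\tilde\sM_j(\fG^\bH))\le \alpha k d_{\max}+\tfrac12|\alpha-\beta|\diam_{G_1}(\cC_j).
\]

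I would take $\cB_0^{(n)}$ to be the intersection of the high-probability events from Theorems~\ref{thm:Jan_Luc_susceptibility}--\ref{thm:Luc-diam} and Lemma~\ref{lem:G0-metric-prop}, so that $\P(G_0\in\cB_0^{(n)})\to 1$ and on it $d_{\max}\le C\epsilon^{-1}\log(\epsilon^3 n)$, $\sum_i|\cC_i|^2\le Cn/\epsilon$, and $|\alpha-\beta|/\beta\le 4(\epsilon^3 n)^{-2/5}$. The expected number of extras conditional on $G_0=G\in\cB_0^{(n)}$ splits into a within-component count bounded by $\tfrac12 p_1\sum_i|\cC_i|^2=O(1)$ and a between-component count bounded by $\tfrac12 p_1^2(\sum_i|\cC_i|^2)^2=O(1)$ via the elementary inequality $mx-(1-(1-x)^m)\le\tbinom{m}{2}x^2$. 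Hence $\E[\alpha k d_{\max}\mid G_0=G]=\alpha d_{\max}(G)\E[k\mid G_0=G]=O(n^{-1/3}\epsilon^{-1}\log(\epsilon^3 n))=O((\epsilon^3 n)^{-1/3}\log(\epsilon^3 n))\to 0$.

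The hard part will be controlling the second term, $|\alpha-\beta|\,\E[\diam_{G_1}(\cC_j)\mid G_0=G]$. The plan is to bound $\diam_{G_1}(\cC_j)\le\diam_{\fG^\bH}(\cC_j)$, so it suffices to show $\E[\alpha\diam_{\fG^\bH}(\cC_j)\mid G_0=G]=O(1)$ uniformly on $\cB_0^{(n)}$; this makes the second term $O((\epsilon^3 n)^{-2/5})=o(1)$. Tightness of $\alpha\diam_{\fG^\bH}(\cC_j)$ follows from Proposition~\ref{prop:metric_bbsw}, whose hypotheses are exactly verified on $\cB_0^{(n)}$ by Lemmas~\ref{lem:G0-aldous-cond} and~\ref{lem:G0-metric-prop}, yielding $\alpha\diam_{\fG^\bH}(\cC_j)\xrightarrow{\,\mathrm{d}\,}\diam(\fM_j(\lambda))$, a.s.\ finite. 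Upgrading tightness to a uniform moment bound—either by extracting moment estimates from the BBSW proof, or via the deterministic bound $\diam_{\fG^\bH}\le(d_{\max}+1)(\diam(\cC_j(\fG))+1)$ combined with critical-type tail control on $\diam(\cC_j(\fG))$ in the multiplicative coalescent—will complete the proof.
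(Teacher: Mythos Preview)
Your proof is essentially the paper's: the same coupling (keep $G_0$, retain one uniformly chosen sprinkled edge per component-pair, discard the rest and all within-component sprinkled edges), the same two-term $d_{\GHP}$ bound via the identity correspondence, the same choice of $\cB_0^{(n)}$, and the same treatment of the first term. The only cosmetic difference is that you bound the distortion by $2d_{\max}$ times the number $k$ of removed edges, whereas the paper uses $2d_{\max}$ times the number $X$ of excessive components and pairs; both quantities have $O(1)$ conditional expectation on $\cB_0^{(n)}$ via the same estimate $p_1\sum_i|\cC_i|^2=O(1)$.

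On the second term you are actually more scrupulous than the paper. The paper writes that ``Proposition~\ref{prop:metric_bbsw} implies $\E[n^{-1/3}\diam(\cC_j(\fG^\bH))\mid G_0=G]=O(1)$'' without further comment, whereas you correctly observe that the proposition as stated yields only convergence in distribution (hence tightness) of $\alpha\,\diam_{\fG^\bH}(\cC_j)$, and that passing to a uniform first-moment bound requires an additional argument; your two suggested routes (moment bounds from the BBSW machinery, or the deterministic inequality $\diam_{\fG^\bH}\le(d_{\max}+1)(\diam_{\fG}(\cC_j)+1)$ together with tail control on the graph-diameter in the multiplicative coalescent) are both reasonable ways to close this.
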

\begin{proof}
Given $G_0$ and $G_1$, we call a pair of connected components $\cC_i(G_0)$ and $\cC_j(G_0)$ {\em excessive in $G_1$} if they are connected in $G_1$ by more than one edge. Similarly, we call a single component $\cC_i(G_0)$ excessive if $G_1$ contains an edge within $\cC_i(G_0)$ (i.e., an edge between two of its vertices) that does not belong to $G_0$. To generate $\fG^\bH$ from~$G_1$, we remove from each excessive component $\cC_i(G_0)$ every edge that does not belong to $G_0$, and further remove all the edges between every pair of excessive components $\cC_i(G_0),\cC_j(G_0)$ except one that is chosen uniformly at random.

The graph $\fG^\bH$ is a $\scW(\mathbf{x},\bH,q)$-distributed subgraph of $G_1$ with the exact same connected components. If we condition on $G_0$, the probability of the event $\mathfrak{X}(i,j)$ that a pair $\cC_i(G_0)$ and $\cC_j(G_0)$ is excessive is at most $(|\cC_i(G_0)||\cC_j(G_0)|p_1)^2$, and the probability of the event $\mathfrak{X}(i)$ that $\cC_i(G_0)$ is excessive is at most $|\cC_i(G_0)|^2p_1$. Therefore, as $p_1=(1+o(1))\epsilon/n$, the variable $X:=\sum_{i}\one_{\mathfrak{X}(i)} + \sum_{i<j}\one_{\mathfrak{X}(i,j)}$ satisfies
\begin{equation}\label{eq:X|G0}
\E[X\mid G_0]\leq (1+o(1))\Big(\frac{\epsilon}n \sum_{i}|\cC_i(G_0)|^2\Big)^2+(1+o(1))\frac{\epsilon}n\sum_{i}|\cC_i(G_0)|^2\,.
\end{equation}

For every pair of vertices $u,v\in [n]$, consider a shortest path $P=(x_1,\ldots,x_m)$ between them in $G_1$, and suppose that $P$ uses an edge between a vertex $x_l\in \cC_i(G_0)$ and $x_{l+1}\in \cC_j(G_0)$ that does not belong to $\fG^\bH$. 
Let $r$ be the largest index such that $x_r\in \cC_j(G_0)$. If $i=j$ then we replace the segment $x_l,\ldots,x_r$ in $P$ with a shortest path in $G_0$ from $x_l$ to $x_r$. If on the other hand $i\ne j$, denote by $yz$ the edge in $\fG^\bH$ that connects $\cC_i(G_0)$ and $\cC_j(G_0)$, and replace the segment $x_l,\ldots,x_r$ by a shortest path from $x_l$ to $y$ in $G_0$, followed by the edge $yz$ and then by a shortest path from~$z$ to $x_r$ in $G_0$. Repeat this procedure until a path in $\fG^\bH$ is obtained. The number of detours does not exceed the number of excessive pairs and components, and each such detour increases the length of the path by at most $2d_{\max}(G_0)$. Therefore,
\[
0\leq \dist_{\fG^\bH}(u,v) - \dist_{G_1}(u,v)  \leq 2d_{\max}(G_0)X\,.
\]
Since this bound holds true for every two vertices $u,v$, and the measure on $\sM_j(G_1)$ identifies with the measure on $\tilde\sM_j(\fG^\bH)$, we find, by taking the scaling factors of the distances into consideration,  that 
\[
d_{\GHP}(\sM_j(G_1),\tilde\sM_j(\fG^\bH)) \le 
2\mathfrak{s}(\mathbf{x},\bH) d_{\max}(G_0) X + |\mathfrak{s}(\mathbf{x},\bH)-n^{-1/3}|\diam(\cC_j(\fG^\bH))\,.
\]
Define $\cB_0^{(n)}$ to be the set of every graph $G$ such that, for $\mathbf{x},\bH,q$ as in~\eqref{eq:x-q-H} with $G$ playing the role of $G_0$, and $\sigma_r$, $x_{\max}$, $x_{\min}$ and $d_{\max}$ defined as in Proposition~\ref{prop:metric_bbsw}, the following conditions are satisfied:
\begin{itemize}
    \item Inequalities~\eqref{eq:G0-bbsw-1}--\eqref{eq:G0-bbsw-5} of Lemma~\ref{lem:G0-metric-prop} hold;
    \item $\sigma_2 \leq 2 / (\epsilon n^{1/3})$; 
    \item $d_{\max}\leq 2\log(\epsilon^3 n)/\epsilon$.
\end{itemize}
By Lemma~\ref{lem:G0-metric-prop} and Theorems~\ref{thm:Jan_Luc_susceptibility}
and~\ref{thm:Luc-diam}, respectively, we have $\P(G_0\in\cB_0^{(n)})\to 1$. Moreover, for every $G\in\cB_0^{(n)}$, we infer from~\eqref{eq:G0-bbsw-5} and the condition on $d_{\max}$ that
\begin{align*}
\E\Big[d_{\GHP}(\sM_j(G_1),&\tilde\sM_j(\fG^\bH))\given G_0=G\Big] \leq (2+o(1)) (\epsilon^3 n)^{-1/3}\log(\epsilon^3 n) \E[X\mid G_0] \\&+ 4(\epsilon^3 n)^{-2/5}\E\left[n^{-1/3}\diam(\cC_j(\fG^\bH))\given G_0=G\right]
\,.
\end{align*}
For all $G\in\cB_0^{(n)}$, Proposition~\ref{prop:metric_bbsw} implies  $\E[n^{-1/3}\diam(\cC_j(\fG^\bH))\mid G_0=G]=O(1)$, and $\E[X\mid G_0=G]=O(1)$ using $\sum_i\cC_i(G)|^2\le 2n/\epsilon$ combined with~\eqref{eq:X|G0}.
It thus follows that $\E[d_{\GHP}(\sM_j(G_1),\tilde\sM_j(\fG^\bH))\mid G_0=G]\to 0$ as $n\to\infty$, as claimed.
\end{proof}

To complete the proof of Theorem \ref{thm:2},~\eqref{it:sens2}, first consider $n^{-1/3+\delta}<\epsilon<1-n^{-1/2}$. By combining Proposition~\ref{prop:metric_bbsw} with Lemmas \ref{lem:G0-metric-prop} and \ref{lem:coupling_metric_spaces} we find that for each $n\ge 1$ there exists a set of graph $\tilde\cB_0^{(n)}$ so that $\P(G_0\in\tilde\cB_0^{(n)})\to 1$, and for every $G\in\tilde\cB_0^{(n)}$,
\begin{enumerate}
    \item The measured metric space $\tilde\sM_j(\fG^\bH)$ conditioned on $G=G_0$ converges in distribution to $\fM_j(\lambda)$  w.r.t.\ \GHP\ distance. 
    \item The spaces $\tilde\sM_j(\fG^\bH)$ and $\sM_j(G_1)$, conditioned on $G_0=G$, can be coupled such that their \GHP\ distance vanishes in probability.
\end{enumerate}
Therefore, by Slutsky's Lemma for Polish spaces (see, e.g.,\cite[Thm.~3.1]{BIL99}), for every $\P(\fM_j(\lambda)\in\cdot)$-continuity Borel set $S\subseteq \scM$,
\[
     \lim_{n\to\infty} \max_{G\in \tilde\cB_0^{(n)}} \left| \P_n\left( \sM_j(G_1)\in S \mid G_0=G\right) - \P(\fM_j(\lambda)\in S)\right| = 0\,.
\]
By Corollary \ref{cor:common-core-full}, this completes the first assertion in Theorem~\ref{thm:2},~\eqref{it:sens} in this regime.

For $\epsilon \geq 1-n^{-1/2}$, this assertion will follow directly from Theorem~\ref{thm:2}, Part~\eqref{it:stab2}, whose proof is in~\S\ref{sec:stab2} below. Indeed, we establish there that if $\tilde\epsilon$ satisfies $\tilde\epsilon^3 n\to 0$, and $\tilde G_0$ and $\tilde G_1$ are random graphs distributed as 
\[ \tilde G_0\sim\cG\big(n,\tfrac{1-\tilde\epsilon}{1-\tilde\epsilon p}p\big)\quad,\quad 
\tilde G_2\sim\cG\left(n,\tilde\epsilon p\right)\,,\] then their union $\tilde G_1=\tilde G_0 \cup \tilde G_2$ satisfies $ d_{\GHP}(\sM_j(\tilde G_0),\sM_j(\tilde G_1))\xrightarrow{\,\mathrm p\,} 0$ as $n\to\infty$.
By Markov's inequality, this  holds conditionally on  $\tilde G_2$ such that $\tilde G_2\in\cK$, for some set of graphs $\cK$ with $\P(\tilde G_2\in\cK)\to 1$.
In addition, $\sM_j(\tilde G_0)\xrightarrow{\,\mathrm d\,}\fM_j(\lambda)$ as $n\to\infty$ since $(n\frac{1-\tilde\epsilon}{1-\tilde\epsilon p}p-1)n^{1/3}\to\lambda$. 
It follows that for every $\tilde G_2\in\cK$, the conditional law of $\sM_j(\tilde G_1)$ given $\tilde G_2$ weakly converges  to that of $\fM_j(\lambda)$.
We now flip the roles of the graphs by setting $\tilde \epsilon=\frac{1-\epsilon}{1-\epsilon p}$. Observe that $\tilde \epsilon^3 n \to 0$, $G_0=\tilde G_2$ and $G_1 = \tilde G_0 \cup \tilde G_2$. Thus, the conditional law of $\sM_j(G_1)$ given $G_0$ such that~$G_0\in\cK$ converges in distribution to $\fM_j(\lambda)$ as $n\to\infty$, and Corollary~\ref{cor:common-core-full} concludes the proof.

The second assertion of Theorem~\ref{thm:2}, Part~\eqref{it:sens2}, follows from the first as shown in~\cite[Thm.~4.1]{ABGM17}, by the same argument appearing there.
\qed

\subsection{Noise stability: proof of Theorem~\ref{thm:2}, Part~(\ref{it:stab})}\label{sec:stab2}
Let $\epsilon^3 n\to 0$ and $G_0,G_1$ as above. We will prove that $d_{\GHP}(\sM_j(G_0),\sM_j(G_1))\xrightarrow[]{\,\mathrm p\,} 0$ for every fixed $j$ by showing that, w.h.p., $\cC_j(G_1)$ is obtained from  $\cC_j(G_0)$ by gluing to it components of size $o(n^{2/3})$ and diameter $o(n^{1/3})$, which are negligible in the scaling limit.

\begin{lemma}\label{lem:Aj}
Let $j\ge 1$ be an integer, and let $A_j$ be the event  that  the component of $G_1$ containing $\cC_j(G_0)$ is $\cC_j(G_1)$. Then, $\P(A_j)\to 1$ as $n\to\infty$.
\end{lemma}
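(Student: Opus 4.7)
The plan is to prove the stronger statement that, with probability tending to $1$, the rank map $\phi$ determined by the inclusion $\cC_i(G_0)\subseteq \cC_{\phi(i)}(G_1)$ acts as the identity on $\{1,\ldots,j\}$; the event $A_j$ is then the special case $\phi(j)=j$. The argument will rest on three ingredients: (I)~with high probability, the components $\cC_1(G_0),\ldots,\cC_{j+1}(G_0)$ lie in $j+1$ distinct $G_1$-components; (II)~for each fixed $i$, $\bigl||\cC_i(G_1)|-|\cC_i(G_0)|\bigr|=o_{\textsc{p}}(n^{2/3})$, a direct consequence of Corollary~\ref{cor:stab_sizes} by projecting onto a single coordinate of the $\ell^2$-statement; and (III)~for each fixed $i$, $|\cC_i(G_0)|-|\cC_{i+1}(G_0)|=\Theta_{\textsc{p}}(n^{2/3})$, which follows from Aldous' convergence $\bX(G_0)\xrightarrow{\mathrm d}\bxi(\lambda)$ together with the a.s.\ distinctness of the entries of $\bxi(\lambda)$.

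The main work is in~(I), which I expect to be the only nontrivial step. Conditional on $G_0$, two distinct $G_0$-components $C,C'$ end up merged in $G_1$ iff there is a sequence $C=D^{(0)},D^{(1)},\ldots,D^{(m)}=C'$ of distinct $G_0$-components joined consecutively by sprinkled edges (edges of $G_1\setminus G_0$). Since a sprinkled edge between two given distinct $G_0$-components $D,D'$ exists with probability at most $|D||D'|p_1$ where $p_1=\epsilon p$, a union bound over all such sequences yields
\[
\P\bigl(C\text{ and }C'\text{ in the same $G_1$-component}\,\big|\,G_0\bigr)\;\le\;\sum_{m\ge 1}|C||C'|\,p_1^m\Big(\sum_k|\cC_k(G_0)|^2\Big)^{m-1}\;=\;\frac{|C||C'|\,p_1}{1-p_1\sum_k|\cC_k(G_0)|^2}\,.
\]
Since $\epsilon^3 n\to 0$, the graph $G_0$ itself sits in the critical window, so Aldous' $\ell^2$-convergence gives $\sum_k|\cC_k(G_0)|^2=O_{\textsc{p}}(n^{4/3})$; hence $p_1\sum_k|\cC_k(G_0)|^2=O_{\textsc{p}}(\epsilon n^{1/3})=o_{\textsc{p}}(1)$, and $|\cC_i(G_0)|=O_{\textsc{p}}(n^{2/3})$ for every fixed $i$. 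For $C,C'$ among the top $j+1$ components, the right-hand side above is therefore $O_{\textsc{p}}(\epsilon n^{1/3})=o_{\textsc{p}}(1)$, and a union bound over the $O(j^2)$ pairs establishes~(I).

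Combining (I)--(III), I would conclude as follows. By~(I), $\phi$ is injective on $\{1,\ldots,j+1\}$ w.h.p. If $\phi(i)=k>j$ for some $i\le j$, then $|\cC_k(G_1)|\ge|\cC_i(G_0)|\ge|\cC_j(G_0)|$, while (II) forces $|\cC_k(G_1)|\le|\cC_{j+1}(G_1)|=|\cC_{j+1}(G_0)|+o_{\textsc{p}}(n^{2/3})$, contradicting the positive gap in~(III); thus $\phi$ restricts to a bijection of $\{1,\ldots,j\}$. Similarly, if $\phi(i)=k>i$ for some $i\le j$, then $|\cC_k(G_1)|\ge|\cC_i(G_0)|$ whereas $|\cC_k(G_1)|=|\cC_k(G_0)|+o_{\textsc{p}}(n^{2/3})$ by~(II), again contradicting~(III); hence $\phi(i)\le i$ throughout, and a bijection of $\{1,\ldots,j\}$ satisfying $\phi(i)\le i$ must be the identity. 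The delicate point is really~(I): two $G_0$-components can end up merged not only via a single sprinkled edge but through chains of arbitrary length, and the geometric-series bound above controls all chain lengths at once precisely because $\epsilon^3 n\to 0$ makes $p_1\sum_k|\cC_k(G_0)|^2$ vanish.
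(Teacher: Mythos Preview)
Your proof is correct, but it takes a longer route than the paper's. Both arguments ultimately rely on the same two facts---Corollary~\ref{cor:stab_sizes} for $\|\bX(G_1)-\bX(G_0)\|_2\xrightarrow{\,\mathrm p\,}0$, and the a.s.\ distinctness of the entries of $\bxi(\lambda)$---but they differ in how the ``merging'' scenario is handled. The paper's proof is purely soft: it picks the minimal $i\le j$ for which $A_i$ fails, lets $k$ be the rank in $G_1$ of the component containing $\cC_i(G_0)$, and observes that $\Delta_k:=|\cC_k(G_1)|-|\cC_k(G_0)|$ must then be macroscopic (either because $k<i$, in which case $\cC_k(G_1)\supseteq\cC_k(G_0)\cup\cC_i(G_0)$ by minimality, or because $k>i$, in which case $|\cC_k(G_1)|\ge|\cC_i(G_0)|>|\cC_{i+1}(G_0)|\ge|\cC_k(G_0)|$). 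Thus $\|\bX(G_1)-\bX(G_0)\|_2\ge n^{-2/3}\Delta_k$ is bounded below by a quantity that converges in law to a positive variable, contradicting Corollary~\ref{cor:stab_sizes}. In particular, the paper never needs your ingredient~(I): the merging of two top-$j$ components is already precluded by the $\ell^2$-closeness, since such a merge forces some $\Delta_k$ to be at least $|\cC_j(G_0)|$.

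Your chain/geometric-series bound for~(I) is a nice self-contained argument and gives an explicit $O_{\textsc p}(\epsilon n^{1/3})$ rate for the merging probability, which the paper's soft argument does not yield. But for the present lemma it is redundant work: once you have~(II) and~(III), the paper's minimal-$i$ device finishes in a few lines. One small point of care in your write-up: in the last step you invoke~(II) for the random index $k=\phi(i)$; this is fine only after you have confined $k$ to the finite set $\{1,\ldots,j\}$, which you do via Steps~A--C, so applying~(II) for each fixed $k\le j$ suffices.
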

\begin{proof}
Suppose that $A_j$ does not hold, and let $i\le j$ be the smallest index for which~$A_{i}$ does not hold. Let $k$ be the index such that $\cC_{k}(G_1) \supseteq \cC_{i}(G_0)$; then
\[ \|\bX(G_1)-\bX(G_0)\|_2^2 \geq n^{-4/3}\Delta_k^2\qquad\mbox{for}\qquad \Delta_k := |\cC_k(G_1)|-|\cC_k(G_0)|\,.\]
If $k<i$ then $|\cC_k(G_1)|\ge |\cC_k(G_0)|+|\cC_i(G_0)|$ by the minimality of $i$, so $\Delta_k \geq |\cC_i(G_0)|$.
If $k> i$ then we simply use $|\cC_k(G_1)|\ge |\cC_i(G_0)|$, whence $\Delta_k \geq |\cC_i(G_0)|-|\cC_k(G_0)|$.
So, if $D^{(n)}_{i,a} = \{ |\cC_i(G_0)| < a n^{2/3}\}$ and $D^{(n)}_{i,k,a}=\{|\cC_i(G_0)|-|\cC_{k}(G_0)| < a n^{2/3}\}$, then 
\begin{align*}
\P(A_j^c) &\le \inf_{a>0}\bigg\{ \P\left(\|\bX(G_1)-\bX(G_0)\|_2\geq a\right)
+ \sum_{i\leq j}\P( D_{i,a}^{(n)}) + \sum_{i\leq j}\sum_{k>i}\P( D_{i,k,a}^{(n)})\bigg\}\,. \end{align*} 
The first term vanishes for any fixed $a>0$ as $n\to\infty$ by Corollary~\ref{cor:stab_sizes}, and the other two vanish as $a\downarrow 0$ since $\bX(G_0)$ converges to a  continues distribution in~$\selltwo$.
\end{proof}

We need the next standard fact, which follows immediately from the work of Lyons~\cite{Lyons92} connecting percolation, random walks and electrical networks on trees. 

\begin{fact}\label{fact:prob-hit-level-k}
Let $\cT$ be a $\bin(n,\frac\mu n)$-Galton--Watson tree for $\mu>0$, and let $\hgt(\cT)$ denote its height. Then for every $k$ one has $1/a_k \leq \P(\hgt(\cT) \geq k) \leq 2/a_k$, where
\[ a_k = \begin{cases} 
 1+ (1-\frac{\mu}n)\frac{1-\mu^{-k}}{\mu-1}& \mbox{if $\mu\neq 1$}\,,\\
1+(1-\frac\mu n)k & \mbox{if $\mu=1$}\,.
\end{cases}
\]
\end{fact}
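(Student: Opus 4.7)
The plan is to prove the two bounds separately. For the lower bound I will apply the Paley--Zygmund inequality to the $k$-th generation size $Z_k$: since $\{\hgt(\cT)\geq k\}=\{Z_k\geq 1\}$, we have $\P(\hgt(\cT)\geq k)\geq (\E Z_k)^2/\E[Z_k^2]$. The standard variance recursion for Galton--Watson trees, using the offspring variance $\mu(1-\mu/n)$, gives $\var(Z_k)=\mu^2\var(Z_{k-1})+\mu^k(1-\mu/n)$, which unwinds to $\E[Z_k^2]=\mu^{2k}a_k$ via the alternative form $a_k=1+(1-\mu/n)\sum_{d=1}^k\mu^{-d}$ of the defining expression (the cases $\mu\ne 1$ and $\mu=1$ both follow from the geometric sum). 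Since $\E Z_k=\mu^k$, this immediately yields $\P(\hgt(\cT)\geq k)\geq 1/a_k$.

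For the upper bound I will proceed by induction on $k$, with the trivial base $q_0=1\leq 2=2/a_0$ (writing $q_k:=\P(\hgt(\cT)\geq k)$). For the inductive step I use the recursion $q_k=1-(1-\mu q_{k-1}/n)^n$, which follows because $\{\hgt(\cT)<k\}$ requires each of the $\bin(n,\mu/n)$ children of the root to have a subtree of height $<k-1$, and combine it with the elementary inequality
\[ 1-(1-y)^n \;\leq\; \frac{ny}{1+(n-1)y/2}, \qquad y\in[0,1], \]
valid for all integers $n\geq 1$. Applying this with $y=\mu q_{k-1}/n$ (allowable since $\mu\le n$), then using monotonicity of $z\mapsto z/(1+cz)$ together with the inductive hypothesis $q_{k-1}\leq 2/a_{k-1}$, gives $q_k\leq 2\mu n/(na_{k-1}+(n-1)\mu)$; this equals $2/a_k$ by virtue of the identity $\mu a_k=a_{k-1}+(n-1)\mu/n$, which is a direct consequence of the alternative form of $a_k$ and $\sum_{d=1}^k\mu^{1-d}=1+\sum_{d=1}^{k-1}\mu^{-d}$.

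The only nontrivial ingredient is the elementary inequality displayed above. After rearrangement it reads $f(y):=(1-y)^n(1+(n-1)y/2) - 1 + (n+1)y/2 \geq 0$ on $[0,1]$; one has $f(0)=0$ and a short differentiation shows
\[ f'(y) \;=\; \tfrac{n+1}{2}\bigl[\,1 - (1-y)^{n-1}(1+(n-1)y)\,\bigr], \]
which is nonnegative because $g(y):=(1-y)^{n-1}(1+(n-1)y)$ satisfies $g(0)=1$ and $g'(y)=-n(n-1)y(1-y)^{n-2}\le 0$ on $[0,1]$. Conceptually this whole argument is the symmetric $n$-ary instance of Lyons's (1992) capacity bounds for percolation on trees: $\cT$ is exactly the cluster of the root in Bernoulli$(\mu/n)$ bond percolation on the complete $n$-ary tree $\mathbb T_n$, and a direct computation (partitioning ordered pairs by depth of deepest common ancestor) shows that $a_k$ equals the energy $\iint p^{-|\xi\wedge\eta|}\,d\nu(\xi)\,d\nu(\eta)$ of the uniform measure $\nu$ on $\partial_k\mathbb T_n$, so that $1/a_k\le q_k\le 2/a_k$ is precisely $\mathrm{cap}(\partial_k\mathbb T_n)\le q_k\le 2\,\mathrm{cap}(\partial_k\mathbb T_n)$.
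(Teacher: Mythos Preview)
Your proof is correct and complete. Both bounds are verified: the Paley--Zygmund computation for the lower bound is exact (the variance recursion and its solution $\E[Z_k^2]=\mu^{2k}a_k$ check out), and the induction for the upper bound is sound --- the elementary inequality $1-(1-y)^n\le ny/(1+(n-1)y/2)$ is proved cleanly, and the identity $\mu a_k=a_{k-1}+(n-1)\mu/n$ indeed closes the step.

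Your route differs from the paper's. The paper does not argue directly: it invokes Lyons's theorem \cite[Thm.~2.1]{Lyons92} relating percolation on trees to effective resistance, assigns conductances $\mathfrak{c}_j=(\mu/n)^j/(1-\mu/n)$ to the edges of the complete $n$-ary tree between levels $j-1$ and $j$, contracts each level, and reads off $\cR_k=(1-\mu/n)\sum_{j=1}^k\mu^{-j}$ so that $\cR_k+1=a_k$ and the two-sided bound is immediate. What each approach buys: the paper's argument is a two-line appeal to a known black box, at the cost of a citation; yours is entirely self-contained and elementary, at the cost of the auxiliary inequality and the inductive computation. Your closing remark that $a_k$ is precisely the energy of the uniform measure on $\partial_k\mathbb{T}_n$ (so that $1/a_k$ is its capacity) makes the connection explicit --- Lyons's lower bound \emph{is} the second-moment bound for the flow measure, and your Paley--Zygmund application recovers exactly this in the spherically symmetric case.
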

\begin{proof}
Letting $\cT_{n,k}$ denote the first levels of the complete $n$-ary tree, we assign the conductance 
$\mathfrak{c}_j = 
(\mu/n)^j/(1-\mu/n)$ to every edge in between levels $j-1$ and $j$ in this tree (regarding the level of the root to be $0$). Further denoting by $\cR_k$ the  resistance between the root and level $k$ of $\cT_{n,k}$, appealing to~\cite[Thm.~2.1]{Lyons92} shows that  $1/(\cR_k+1) \leq \P(\hgt(\cT)\geq k)\leq 2/(\cR_k+1)$. Contracting the vertices in each level yields a path from $0$ to $k$ where $\tilde{\mathfrak{c}}_j =n^j \mathfrak{c}_j = \mu^j/(1-\mu/n)$ for $1\leq j \leq k$. Since we have $\cR_k=\sum_{j=1}^k \tilde{\mathfrak c}_j^{-1}$, the required inequality is established.
\end{proof}

\begin{lemma}\label{lem:Bj}
Let $j\ge 1$, $\delta>0$ and denote by $B_{j,\delta}$ the event that
\[
\sup_{v\in\cC_j(G_1)\setminus\cC_j(G_0)}\dist_{G_1}(v,\cC_j(G_0)) \le \delta n^{1/3}.
\]
Then, with $A_j$ defined as in Lemma~\ref{lem:Aj}, we have $\P(A_j \setminus B_{j,\delta})\to 0$ as $n\to\infty$.
\end{lemma}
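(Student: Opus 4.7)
The plan is to show that, on the event $A_j$, the added set $\cS_j:=\cC_j(G_1)\setminus\cC_j(G_0)$ is both small in cardinality and tightly clustered near $\cC_j(G_0)$ in $G_1$.

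First, I would invoke Corollary~\ref{cor:stab_sizes} (via Lemma~\ref{lem:Aj}): on $A_j$, $|\cS_j|=|\cC_j(G_1)|-|\cC_j(G_0)|=o_p(n^{2/3})$. Consequently, for any $\eta>0$, $\P(A_j\cap\{|\cS_j|>\eta n^{2/3}\})\to 0$, so it suffices to bound the probability of $A_j\cap\{|\cS_j|\le\eta n^{2/3}\}\cap B_{j,\delta}^c$ for $\eta=\eta(\delta)$ chosen suitably small.

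Next, I would reduce the $G_1$-distance bound to a fragment-diameter statement in $G_1':=G_1[V\setminus\cC_j(G_0)]$. For any $v\in\cS_j$, its $G_1'$-component $\cD_v$ is contained in $\cS_j$, so $|\cD_v|\le|\cS_j|$; moreover $\cD_v$ contains a vertex that is $G_1$-adjacent to $\cC_j(G_0)$ (since $\cD_v\cup\cC_j(G_0)$ is $G_1$-connected). Hence
\[
\dist_{G_1}(v,\cC_j(G_0))\ \le\ \diam_{G_1'}(\cD_v)+1,
\]
and it suffices to show that with probability tending to $1$, every $G_1'$-component of size at most $\eta n^{2/3}$ has $G_1'$-diameter at most $\delta n^{1/3}-1$. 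For this, I would stochastically dominate the BFS from each $v\in V\setminus\cC_j(G_0)$ in $G_1'$ by a $\bin(n,p)$-Galton--Watson tree $\cT$ with mean offspring $\mu=np=1+\lambda n^{-1/3}$, and apply the standard near-critical joint tail estimate
\[
\P\bigl(|\cT|\le s,\ \hgt(\cT)\ge h\bigr)\ \le\ C\exp(-c h^2/s),
\]
derivable from the contour process of $\cT$ or by a moment recursion on the level-counts (Fact~\ref{fact:prob-hit-level-k} controls the unconditional height in the same spirit). With $h=\lceil(\delta n^{1/3}-1)/2\rceil$ and $s=\eta n^{2/3}$, this yields a per-vertex probability of at most $Ce^{-c\delta^2/(4\eta)}$.

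The main obstacle will be reconciling the size bound from the first step with the exponential tail above: a naive union bound over all $n$ vertices demands $\eta\lesssim 1/\log n$, which is stronger than the bare $o_p(n^{2/3})$ that Corollary~\ref{cor:stab_sizes} delivers. To close this gap, I would sharpen the size-stability by exploiting the slack in $q\sigma_2=(1+o(1))\epsilon n^{1/3}\to 0$: Lemmas~\ref{lem:Aldous-G-Gbar} and~\ref{lem:Aldous-sigma2} together bound the expected squared increment in super-component weights by $O(q\sigma_2^2)$, and a Markov-type argument translates this into a polylogarithmic decay rate on $|\cS_j|/n^{2/3}$ that matches the required $\eta$; if this is insufficient in borderline sub-regimes, a direct supergraph-BFS argument (controlling the expected number of sprinkled-edge hops from $\cC_j(G_0)$ through $k$-th moment computations on $\scW(\mathbf{x},q)$) should suffice instead.
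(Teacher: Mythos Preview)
Your route differs from the paper's and carries a genuine gap in the borderline regime where $\epsilon n^{1/3}\to 0$ only slowly (say $\epsilon = n^{-1/3}/\log\log n$). The union bound over all $n$ vertices forces you to drive the per-vertex tail $C\exp(-c\delta^2/(4\eta))$ below $o(1/n)$, i.e., to secure $\eta \lesssim \delta^2/\log n$. But the size-stability you can extract from Lemmas~\ref{lem:Aldous-G-Gbar}--\ref{lem:Aldous-sigma2} is only
\[
\P\bigl(\|\bX(G_1)-\bX(G_0)\|_2^2>\eta^2\bigr)\ \le\ O\!\left(\frac{q\sigma_2^2}{\eta^2}\right)\ =\ O\!\left(\frac{\epsilon n^{1/3}}{\eta^2}\right),
\]
which with $\eta\asymp 1/\log n$ requires $\epsilon n^{1/3}(\log n)^2\to 0$, strictly stronger than the hypothesis $\epsilon n^{1/3}\to 0$. (Your aside that these lemmas yield an \emph{expectation} bound $O(q\sigma_2^2)$ on the squared increment is also not right: the tail $qs\sigma_2/(s-\sigma_2)$ does not integrate.) The vague ``direct supergraph-BFS'' fallback you mention would need to be made precise before it can close this gap.

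The paper sidesteps the union bound entirely by exploiting the smallness of $p_1=\epsilon p$ directly rather than only through the smallness of $|\cS_j|$. It removes the sprinkled edges incident to $\cC_j(G_0)$ to form an auxiliary graph $G^{(j)}$, and observes that $A_j\setminus B_{j,\delta}$ forces a sprinkled edge from $\cC_j(G_0)$ into the set $T=\{u\notin\cC_j(G_0): r_{G^{(j)}}(u)>\delta n^{1/3}\}$. One then bounds $\E|T|=O(\delta^{-1}n^{2/3})$ using Fact~\ref{fact:prob-hit-level-k} (the unconditional height tail in near-critical Galton--Watson, plus the $O(n^{2/3})$ expected count of vertices in cyclic components), and multiplies by $p_1\,|\cC_j(G_0)|=O(\epsilon n^{-1/3})$. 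The product is $O(\kappa\delta^{-1}\epsilon n^{1/3})\to 0$. The point is that no size restriction on the fragments is needed, and no bound sharper than $\epsilon n^{1/3}\to 0$ is invoked; the whole of $\epsilon$'s smallness is spent on the single Markov step through $p_1$, not wasted on beating a union bound over $n$.
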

\begin{proof}
Denote by $r_G(u):=\max_{v}\dist_G(u,v)$ where the maximum is over every vertex~$v$ that lies in the same component in $G$ as $u$. Note that if $H$ is a subgraph of $G$ and $u$ lies in an acyclic component of $G$ then $r_G(u)\ge r_H(u).$ 

Let us define a graph $G_0\subseteq G^{(j)}\subseteq G_1$ obtained by removing from $G_1$ all the edges that do not belong to $G_0$ and have an endpoint in $\cC_j(G_0)$. As such, $G_1$ is obtained from $G^{(j)}$ by connecting every pair of non-adjacent vertices in $G_0$ that has an endpoint in $\cC_j(G_0)$ independently with probability $p_1$. Therefore, $A_j\setminus B_{j,\delta}$ can hold only if an edge is added in this way to $G^{(j)}$ between $\cC_j(G_0)$ and $T$, where \[T=\left\{u\notin\cC_j(G_0)\,:\;r_{G^{(j)}}(u)>\delta n^{1/3}\right\}\]
(a shortest path from $\cC_j(G_0)$ to $v$ in $G_1$ starts with such an edge, and all its other edges belong to $G^{(j)}$).
Let $E_\kappa=\{|\cC_j(G_0)|\leq \kappa n^{2/3}\}$ for some large constant $\kappa$; writing $\P(E_\kappa)=1-\delta_\kappa$, as mentioned above one has $\delta_\kappa\to 0$ as $\kappa\to\infty$. Hence,
\begin{align}\nonumber 
    \P(A_j\setminus B_{j,\delta}) &\leq \delta_\kappa + \P(A_j\setminus B_{j,\delta}\,, E_\kappa)  \leq\delta_\kappa + 
    \E\big[ \E\left[p_1|\cC_j(G_0)||T| \one_{E_\kappa} \mid G_0\right]\big]  \\
&\leq \delta_\kappa + p_1 \kappa n^{2/3}  \E |T| \,.
\label{eq:Aj-minus-Bj}
\end{align}
Every $u\in T$ either belongs to a cyclic component of $G_1$, or has $r_{G_1}(u)>\delta n^{1/3}$. 
The expected number of vertices in cyclic components is $O(n^{2/3})$ (see~\cite[Lemma~2.2]{LPW94}), and by Fact~\ref{fact:prob-hit-level-k}, the probability that $r_{G_1}(u)>\delta n^{1/3}$ is at most $(1/\delta+o(1))n^{-1/3}$ if $\lambda=0$, and $(\lambda/(1-e^{-\lambda\delta})+o(1))n^{-1/3}$ if $\lambda\neq 0$. Thus, $\E|T|= O(n^{2/3}/\delta)$.

Combining this, as well as $p_1=(1+o(1))\epsilon/n$, with~\eqref{eq:Aj-minus-Bj}, we may now conclude that $
\P(A_j\setminus B_{j,\delta}) \le \delta_\kappa+ O(\kappa\delta^{-1}\epsilon n^{1/3}) =\delta_\kappa+o(1)$,
 since $\epsilon n^{1/3}\to 0$.
\end{proof}

\begin{lemma}\label{lem:Cj}
Let $j\ge 1$, and denote by $C_j$ the event that  $\dist_{G_0}(u,v)=\dist_{G_1}(u,v)$ for every two vertices $u,v \in \cC_j(G_0)$. Then, $\P(C_j)\to 1$.
\end{lemma}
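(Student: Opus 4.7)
The plan is to show that, with high probability, no edge of $G_1\setminus G_0$ creates a shortcut between any pair of vertices of $\cC^*:=\cC_j(G_0)$. If $C_j$ fails, fix $u,v\in\cC^*$ and a path $P$ in $G_1$ from $u$ to $v$ with $|P|<\dist_{G_0}(u,v)$. Since $G_0\subseteq G_1$, the path $P$ must contain at least one \emph{new edge} (an edge of $G_1\setminus G_0$). Enumerate the new edges of $P$ in the order they appear as $e_i=(a_i,b_i)$ for $i=1,\ldots,m$, with $m\ge 1$. The segments of $P$ between consecutive new edges lie in $G_0$, so $a_1$ is in the $G_0$-component of $u$, which is $\cC^*$; $b_m$ is in the $G_0$-component of $v$, again $\cC^*$; and for $1\le i\le m-1$, the vertices $b_i$ and $a_{i+1}$ lie in a common $G_0$-component $\cD_i$. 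I call such a configuration of $m$ (pairwise distinct, since $P$ is simple) new edges an \emph{$m$-chain} from $\cC^*$ to itself, setting $\cD_0=\cD_m:=\cC^*$. The existence of a shortcut implies the existence of some $m$-chain with $m\ge 1$.

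The plan is then to bound $\P(\neg C_j)$ via Markov's inequality applied to the total count of $m$-chains, summed over $m\ge 1$. Conditionally on $G_0$, the new edges are independent Bernoulli$(p_1)$ variables, so the expected number of $m$-chains equals $p_1^m$ times the structural count
\[
\sum_{\cD_1,\ldots,\cD_{m-1}}\prod_{i=1}^m|\cD_{i-1}||\cD_i| \;=\; |\cC^*|^2 \sigma_2^{m-1},
\]
where $\sigma_2:=\sum_i|\cC_i(G_0)|^2$ and the sum ranges over components of $G_0$. Summing over $m\ge 1$ yields the bound $|\cC^*|^2 p_1/(1-p_1\sigma_2)$ provided $p_1\sigma_2<1$.

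To control these factors I plan to use the critical-window structure of $G_0$: under $\epsilon^3 n\to 0$ the parameter $p_0$ is also in the critical window (with limiting parameter $\lambda+o(1)$), hence Aldous's scaling limit (alternatively Theorem~\ref{thm:Jan_Luc_susceptibility}) gives tightness of $n^{-2/3}|\cC^*|$ and $n^{-4/3}\sigma_2$. For $\kappa>0$ let $E_\kappa=\{|\cC^*|\le\kappa n^{2/3},\;\sigma_2\le\kappa n^{4/3}\}$; then $\P(E_\kappa)\to 1$ as $\kappa\to\infty$, uniformly in $n$. On $E_\kappa$ one has $p_1\sigma_2=O(\kappa\epsilon n^{1/3})=O(\kappa(\epsilon^3 n)^{1/3})=o(1)$, whence the summed expected chain count is at most $2|\cC^*|^2 p_1=O(\kappa^2\epsilon n^{1/3})=o(1)$ by $\epsilon^3 n\to 0$. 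Applying Markov's inequality conditionally on $G_0\in E_\kappa$ and then letting first $n\to\infty$ and then $\kappa\to\infty$ yields $\P(\neg C_j)\to 0$.

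The main subtlety is the reduction to counting structural chains: Markov upper-bounds the probability of existence by the expected count, so overcounting chains that do not actually produce a strictly shorter $G_1$-path is harmless. The heart of the matter is that the expected number of single-excursion ``round trips'' of new edges from $\cC^*$ back to itself is $O(|\cC^*|^2 p_1)=o(1)$, with each additional excursion step contributing a multiplicative factor $p_1\sigma_2=o(1)$, giving a trivially convergent geometric series.
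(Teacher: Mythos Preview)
Your argument is correct, but proceeds along a different line from the paper's. The paper splits into two cases: first it rules out new edges with both endpoints in $\cC_j(G_0)$ (your $m=1$ chains) via the bound $\E[|\cC_j(G_0)|^2]\,p_1=O(\epsilon n^{1/3})$; then, for shortcuts leaving $\cC_j(G_0)$, it introduces the auxiliary graph $G^{(j)}$ (obtained from $G_1$ by deleting all new edges incident to $\cC_j(G_0)$) and shows that w.h.p.\ no connected component of $G^{(j)}$ is joined to $\cC_j(G_0)$ by two or more new edges. This second step needs a bound on $\sum_i|\cC_i(G^{(j)})|^2$, for which the paper invokes Aldous's monotonicity lemma (Lemma~\ref{lem:Aldous-G-Gbar}) to compare with $\sum_i|\cC_i(G_1)|^2$.

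Your route avoids both the auxiliary graph and the appeal to Lemma~\ref{lem:Aldous-G-Gbar}: you work directly with $G_0$-components, observe that any shortcut yields an $m$-chain of new edges through them, and control the expected total chain count by a geometric series in $p_1\sigma_2(G_0)$. This is more elementary and handles all $m\ge 1$ uniformly. The paper's decomposition, on the other hand, yields the slightly stronger topological statement that each external $G^{(j)}$-component attaches to $\cC_j(G_0)$ at a single point, and reuses the graph $G^{(j)}$ already introduced in Lemma~\ref{lem:Bj}.
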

\begin{proof}
First we claim that w.h.p.\ no two vertices in $\cC_j(G_0)$ that are non-adjacent in $G_0$ are adjacent in $G_1$.
    Indeed, the expected number of such pairs of vertices is at most $\E[|\cC_j(G_0)|^2]p_1=O(n^{1/3}\epsilon)\to 0$ as $n\to\infty$. 

Next, consider the random graph  $G_0\subseteq G^{(j)}\subseteq G_1$ that was defined in Lemma~\ref{lem:Bj}. We claim that w.h.p.\ no component of $G^{(j)}$ is connected to $\cC_j(G_0)$ in $G_1$ by more than one edge. Together with the first claim, this would imply that the shortest path in~$G_1$ between any pair of vertices in $\cC_j(G_0)$ contains only edges from $G_0$.

Let $X$ be the number of pairs of vertices that lie in the same component of $G^{(j)}$ and are both connected by an edge to $\cC_j(G_0)$ in $G_1$. For $E_\kappa=\{|\cC_j(G_0)|\le \kappa n^{2/3}\}$ as in Lemma~\ref{lem:Bj}, satisfying $\P(E_\kappa^c)=\delta_k\to 0$ as $\kappa\to\infty$, we have
\begin{align*}
\P(X>0) &\leq  \P(E_\kappa^c)+\E[X \one_{E_\kappa}] \leq \delta_k + \E\Big[ \E\big[p_1^2 |\cC_j(G_0)|^2\one_{E_\kappa}\sum_i|\cC_i(G^{(j)})|^2  \given G_0\big]\Big]\\ 
&\leq \delta_k +(1+o(1))\big(\kappa\epsilon n^{1/3}\big)^2\,\E\Big[n^{-4/3}\sum_i|\cC_i(G^{(j)})|^2\Big]\,.
\end{align*}
By Lemma \ref{lem:Aldous-G-Gbar}, $\sum_i|\cC_i(G^{(j)})|^2 \le 
\sum_i|\cC_i(G_1)|^2$ deterministically, since $G^{(j)}\subseteq G_1$. In addition, we have \[\E\Big[n^{-4/3}\sum_i|\cC_i(G_1)|^2\Big]=O(1)
\] by the convergence of $\bX(G_1)$ in $(\selltwo,\|\cdot\|_2)$. Therefore,
\[
\P(X>0) \le \delta_k + O((\kappa\epsilon n^{1/3})^2) =\delta_k +o(1)
\]
since $\epsilon n^{1/3}\to 0$.
\end{proof}
We may now conclude the proof of Theorem~\ref{thm:2}, Part~\eqref{it:stab2}.
Fix $j\ge 1$, and condition on the events $A_j\cap B_{j,\delta}\cap C_j$ as per Lemmas~\ref{lem:Aj},~\ref{lem:Bj} and~\ref{lem:Cj}. On these events, we have that $\cC_j(G_0)$ is  isometrically embedded in $\cC_j(G_1)$, and $\cC_j(G_1)$ is contained in a ball of radius $o(n^{1/3})$ about $\cC_j(G_0)$. In addition, both graph are equipped with the counting measure multiplied by $n^{-2/3}$ and they differ by only $o(n^{2/3})$ vertices. Therefore, $d_{\GHP}(\sM_j(G_0),\sM_j(G_1))=o(1)$.

The extension of convergence in the product topology to convergence in $d_{\GHP}^4$ follows, as it did in Part~\eqref{it:sens2}, from the proof of \cite[Thm.~4.1]{ABGM17}.
\qed

\subsection*{Acknowledgment} E.L.~was supported in part by NSF grant DMS-1812095.

\bibliographystyle{abbrv}
\bibliography{noise_gnp}

\end{document}